\newtheorem{theorem}{Theorem}[section]
\newtheorem{lemma}[theorem]{Lemma}
\newtheorem{proposition}[theorem]{Proposition}
\newtheorem{corollary}[theorem]{Corollary}
\theoremstyle{definition}
\newtheorem{example}[theorem]{Example}
\author[Kassie Archer and Noel Bourne]{Kassie Archer\affiliationmark{1}\thanks{ORCID: 0000-0001-5702-8723}
  \and Noel Bourne\affiliationmark{2}}
\title[Pattern avoidance in compositions and powers of permutations]{Pattern avoidance in compositions and powers of permutations}
\affiliation{
  Department of Mathematics, United States Naval Academy, Annapolis, USA\\
  Department of Mathematical Sciences, Tennessee State University, Nashville, USA}
\keywords{pattern avoidance, chain avoidance, generating functions, generalized Fibonacci numbers}
\renewcommand{\S}{\mathcal{S}}
\renewcommand{\d}{\mathbf{d}}
\renewcommand{\c}{\mathbf{c}}
\begin{document}
\publicationdata{vol. 28:1, Permutation Patterns 2025}{2026}{4}{10.46298/dmtcs.17199}{2025-12-28; 2025-12-28; 2026-05-04}{2026-05-04}
\maketitle
\begin{abstract}
     A permutation $\pi$ is said to avoid a chain $(\sigma:\tau)$ of patterns if $\pi$ avoids $\sigma$ and $\pi^2$ avoids $\tau.$ In this paper, we define a notion of pattern avoidance in compositions of positive integers and use that idea to enumerate permutations of length $n$ that avoid the chain $(312,321:\sigma)$ for any pattern $\sigma\in \bigcup_{m\geq 1} \mathcal{S}_m$. We also enumerate those permutations that avoid the chain $(312,4321:\sigma)$ for any $\sigma\in\mathcal{S}_3.$ 
\end{abstract}

\section{Introduction}

Let $\S_n$ denote the set of permutations of $[n]=\{1,2,\ldots, n\}.$ In this paper, we always write permutations $\pi\in\S_n$ in their one-line form as $\pi=\pi_1\pi_2\ldots\pi_n.$ We say that a permutation $\pi\in\S_n$ \textit{contains} a pattern $\sigma\in\S_k$ if there is some subsequence of $\pi$ that appears in the same relative order as the elements of $\sigma$. We say that $\pi$ \textit{avoids} $\sigma$ if it does not contain it. For example, the permutation $\pi = 416728359$ contains $312$ since $635$ is a subsequence of $\pi$ that appears in the same relative order as 312. However, it avoids 321 since there is no subsequence of $\pi$ of length 3 that appears in the same order as 321. We denote by $\S_n(\sigma)$ the set of permutations in $\S_n$ that avoid $\sigma$. The study of pattern-avoiding permutations began in \cite{Knuth,SS85}; see also \cite{Bona}.

Recently, the notion of strong avoidance was defined in \cite{BS19}, in which a permutation $\pi$ \textit{strongly avoids} a pattern $\sigma$ if both $\pi$ and $\pi^2$ avoid $\sigma$. The authors of that paper consider permutations that strongly avoid some $\sigma\in\S_3.$ They completely enumerate those that strongly avoid 312 or 123, and find bounds on the number of those that strongly avoid 321 or 132. Later in \cite{BD20}, permutations that strongly avoid the sets $\{132, 3421\}$, $\{213, 4312\}$, and $\{321, 3412\}$ were enumerated and in \cite{P23}, those that strongly avoid $\{321, 1342\}$ were shown to be enumerated by $2F_{n+1}-n-2$, where $F_{n+1}$ is the $(n+1)$-th Fibonacci number. In \cite{PG24}, the authors considered those permutations that avoid certain patterns of length 3 or 4 and strongly avoid the pattern 312.

The notion of strong avoidance was generalized in \cite{AG24} to the notion of chain avoidance, in which $\pi$ and $\pi^2$ may avoid different patterns. We say $\pi$ \textit{avoids the chain} $(\sigma:\tau)$ if $\pi$ avoids $\sigma$ and $\pi^2$ avoids $\tau.$ As with regular pattern avoidance, we can also consider the case where $\pi$ and $\pi^2$ avoid sets of patterns. For example, we say $\pi$ avoids $(312,4321: 213)$ if $\pi$ avoids both $312$ and $4321$ while $\pi^2$ avoids 213. In \cite{AG24}, the authors enumerate permutations that avoid the chain $(312,213:\sigma)$ or $(231,321:\sigma)$ for any $\sigma\in\S_3$.

In this paper, we will enumerate those permutations that avoid the chain $(312,321:\sigma)$ for any pattern $\sigma$ of any length. First, in Section~\ref{sec:compositions} we define a notion of pattern avoidance in compositions of $n$. We note that this definition differs from the definition of pattern avoidance for compositions that appears in \cite{HM06,SW06}, but is related to the notion of composition containment in \cite{SV,V} and of dominating compositions in \cite{JMS, MS22}. In Section~\ref{sec:312-321}, we then define a set of compositions associated to any possible pattern in $\pi^2$ for a given $\pi$ that avoids $312$ and 321. We use this set of compositions and results from Section~\ref{sec:compositions} to find recursive formulas for permutations avoiding 312 and 321 whose square avoids any given pattern $\sigma$. Finally, in Section~\ref{sec:312-4321}, we enumerate those permutations avoiding the chain $(312,4321:\sigma)$ for any $\sigma\in \S_3$ using different methods.

Throughout this paper, we will use the Fibonacci numbers  (OEIS A000045) and their generalizations. We define the Fibonacci sequence recursively as $F_{n}=F_{n-1}+F_{n-2}$ with $F_0=0$ and $F_1=1.$ For $k\geq 2$, we similarly define the $k$-nacci sequence to be \[F^{(k)}_n = \sum_{i=1}^k F_{n-i}^{(k)}\] with $F^{(k)}_i=0$ for $0\leq i<k-1$ and $F^{(k)}_{k-1}=1$. In the special case where $k=3$, we call these the Tribonacci numbers (OEIS A000073) and denote them by $T_n$, and in the case where $k=4$, we call these the Tetranacci numbers (OEIS A000078) and denote them by $Q_n.$

\section{Composition avoidance}\label{sec:compositions}
A composition $\d$ of $n$ is a sequence of positive integers $\d=(d_1,d_2,\ldots, d_m)$ so that \[d_1+d_2+\cdots+d_m=n.\]
Given a composition $\d = (d_1,d_2,\ldots,d_m)$ of $n$, we say that the composition $\d$ \textit{contains} another composition $\c= (c_1,c_2,\ldots,c_k)$ if there is some $i_1<\cdots < i_k$ so that $d_{i_j}\geq c_j$ for all $1\leq j \leq k$. In this case, we say the sub-composition $(d_{i_1},\ldots,d_{i_k})$ \textit{dominates} $\c$. (The notion of one composition dominating another also appears when studying pattern avoidance in set partitions as in \cite{JMS, MS22}. There, the term ``dominate'' corresponds to our notion of containment.) We say that a composition $\d$ \textit{avoids} $\c$ if $\d$ does not contain $\c$. 
For example, the composition 
$\d = (3,6,4,2,5,1)$ contains $(4,1,2,3)$ since $(6,4,2,5)$ dominates $(4,1,2,3)$ and $\d$ avoids $(4,1,3,2)$ since there is no subsequence of $\d$ of length 4 that dominates this pattern.
As another example, $\d=(2,1,2,1,1,2)$ avoids the composition $(3)$ since there is no element of $\d$ greater than or equal to 3. 
Note that we are not requiring  $(d_{i_1},\ldots,d_{i_k})$ and $\c=(c_1,\ldots, c_k)$ to be order-isomorphic as we do for permutation pattern containment. Let $b_n(\c)$ denote the number of compositions of $n$ that avoid the composition $\c.$ We will take $b_n(())=0$, i.e., by convention, we will say that no composition of $n$ avoids the empty composition.

Given a composition $\c = (c_1,c_2,\ldots,c_k)$ with $k\geq 1$, let $\hat{\c}$ be the composition $\hat{\c} = (c_2,\ldots,c_k)$. Note that if $k=1$, then $\c=(c)$ for some $c$, so $\hat\c = ()$, the empty composition.

\begin{theorem}\label{thm:one comp}
    For $n\geq 1,$ and any composition $\c = (c_1,c_2,\ldots, c_k)$, then:
    \[
    b_n(\c) =  \sum_{i=1}^{c_1-1} b_{n-i}(\c) + \sum_{i=c_1}^n b_{n-i}(\hat{\c})
    \]
    with $b_n(())=0$ for all $n\geq 0$, and $b_0(\c) = 1$ for all nonempty compositions $\c$.
\end{theorem}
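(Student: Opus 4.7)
The plan is to condition on the first part $d_1$ of an avoiding composition $\d = (d_1, d_2, \ldots, d_m)$ of $n$, and argue that deleting $d_1$ reduces the problem either to avoiding $\c$ itself or to avoiding $\hat{\c}$, according to whether $d_1 < c_1$ or $d_1 \geq c_1$. The two cases will produce the two sums in the recurrence. The base cases are immediate from the statement: $b_0(\c) = 1$ because the empty composition vacuously avoids every nonempty $\c$, and $b_n(()) = 0$ is stipulated by convention.

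Fix $n \geq 1$ and write $\d' = (d_2, \ldots, d_m)$, which is a composition of $n - d_1$ (possibly empty). In the first case, $d_1 = i$ with $1 \leq i < c_1$: if $\d$ contained $\c$ via indices $i_1 < \cdots < i_k$, the first element $d_{i_1}$ of the witness would need to satisfy $d_{i_1} \geq c_1 > d_1$, forcing $i_1 \geq 2$, so the whole witness would already lie in $\d'$. Thus $\d$ avoids $\c$ if and only if $\d'$ does, yielding the contribution $\sum_{i=1}^{c_1-1} b_{n-i}(\c)$. In the second case, $d_1 = i$ with $c_1 \leq i \leq n$: I claim $\d$ avoids $\c$ if and only if $\d'$ avoids $\hat{\c}$. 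The backward direction comes from prepending $d_1$ (which dominates $c_1$) to any witness of $\hat{\c}$ in $\d'$ to produce a witness of $\c$ in $\d$. For the forward direction, given any witness $i_1 < \cdots < i_k$ of $\c$ in $\d$, the tail indices $i_2, \ldots, i_k$ necessarily lie in $\d'$ (they are all at least $2$, regardless of whether $i_1 = 1$) and dominate $c_2, \ldots, c_k$, so $\d'$ contains $\hat{\c}$. This yields the contribution $\sum_{i=c_1}^{n} b_{n-i}(\hat{\c})$, and adding the two sums completes the proof.

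There is no genuine obstacle: the work is a careful case analysis, and the main point of care is the forward direction of the second case, where one must observe that the dominating role of $d_1$ lets us discard $i_1$ from the witness without worry. The one degenerate situation to check is $k = 1$, where $\hat{\c} = ()$: then the second sum collapses to $\sum_{i=c_1}^{n} b_{n-i}(()) = 0$ by convention, correctly reflecting the fact that any composition with a part of size at least $c_1$ contains $(c_1)$ and hence fails to avoid it.
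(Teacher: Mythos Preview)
Your proof is correct and follows essentially the same approach as the paper: both condition on the first part $d_1$ of a $\c$-avoiding composition and split into the cases $d_1<c_1$ and $d_1\geq c_1$, reducing to avoidance of $\c$ or $\hat\c$ respectively. Your version is in fact more carefully written, as you verify both directions of each biconditional explicitly, whereas the paper states only the forward direction and leaves the converse implicit.
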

\begin{proof}
    First, let us consider the case where $k=1$. Then $\c=(c)$ for some positive integer $c.$ Clearly, compositions that avoid this pattern are exactly those composed of elements less than $c.$ Thus $b_n(\c) = \sum_{i=1}^{c-1} b_{n-i}(\c).$ Since $b_n(())=0$ for any $n$, the theorem holds in this case. 

    Now consider the case when $k>1.$ In this case, any composition $\d=(d_1,\ldots, d_m)$ that avoids $\c$ must either have the property that $d_1<c_1$ and $(d_2,\ldots, d_m)$ avoids $\c$ or that $d_1\geq c_1$ and $(d_2,\ldots, d_m)$ avoids $\hat\c$. The statement of the theorem follows immediately from this observation.  
\end{proof}

\begin{example}
    Let us consider the example with $\c=(3,2)$. Then $c_1=3$ and $\hat\c=(2)$. By Theorem~\ref{thm:one comp}, we have that \[
    b_n((3,2)) = \sum_{i=1}^2 b_{n-i}((3,2)) + \sum_{i=3}^n b_{n-i}((2)).
    \] Since for any $n,$ we have $b_n((2))=1$ (since any composition avoiding $(2)$ must only be composed of $1$'s), if we take $b_n:=b_n((3,2))$, then $b_n$ satisfies 
    \[b_n=b_{n-1}+b_{n-2}+n-2\] with initial conditions $b_1=1$ and $b_2=2$. Solving this recurrence gives us that $b_n((3,2))=F_{n+3}-n-1.$
\end{example}

\begin{example}\label{ex:Fibonacci}
Let $\c=(k+1)$ for some $k\geq1$. Note that compositions avoiding $\c$ are exactly the compositions with no part greater than $k$. The recurrence relation given by Theorem~\ref{thm:one comp} is \begin{align*}
    b_{n}((k+1))=\sum_{i=1}^{k}b_{n-i}((k+1))+\sum_{i=k+1}^{n}b_{n-i}(()).
\end{align*} Since the second sum is equal to 0, we have \begin{align*}
    b_{n}((k+1))=\sum_{i=1}^{k}b_{n-i}((k+1))
\end{align*}
with $b_0((k+1))=1$ and $b_j((k+1))=0$ when $j<0.$
Thus we must have
\begin{align*}
    b_{n}((k+1))=F_{n+k-1}^{(k)}
\end{align*}
for $n\geq 0$, where $F_{n+k-1}^{(k)}$ is the $(n+k-1)$st $k$-nacci number.
\end{example}

We can also enumerate those compositions of $n$ that avoid a set of compositions. 
Given a set of compositions $C = \{\c^{(1)}, \c^{(2)}, \ldots, \c^{(m)}\}$ with $
    \c^{(i)} = (c^{(i)}_1,c^{(i)}_2,\ldots, c^{(i)}_{k_i})$ for each $1\leq i\leq m$, ordered by their first element (so that $c^{(i)}_1\leq c^{(i+1)}_1$ for each $i$), define the set $\hat{C}^j$ to be the set of compositions
    \[\hat{C}^j = \{\hat\c^{(1)}, \hat\c^{(2)}, \ldots, \hat\c^{(j)}, \c^{(j+1)}, \ldots, \c^{(m)}\}\]
\begin{theorem}\label{thm:mult comp}
    For $n\geq 1$, $m\geq 1$, and any set of compositions $C = \{\c^{(1)}, \c^{(2)}, \ldots, \c^{(m)}\}$ with $
    \c^{(i)} = (c^{(i)}_1,c^{(i)}_2,\ldots, c^{(i)}_{k_i})$, and $c^{(i)}_1\leq c^{(i+1)}_1$ for each $i$, then:
    \[
    b_n(C) =  \sum_{i=1}^{c^{(1)}_1-1} b_{n-i}(C) + \sum_{i=c_1^{(1)}}^{c^{(2)}_1-1} b_{n-i}(\hat{C}^1)+ \cdots  + \sum_{i=c_1^{(m-1)}}^{c^{(m)}_1-1} b_{n-i}(\hat{C}^{m-1})+\sum_{i=c^{(m)}_1}^n b_{n-i}(\hat{C}^{m})
    \]
     with $b_r(C)=0$ for all $r\geq 0$ if $C$ contains the empty composition, and $b_0(C) = 1$ for all nonempty sets $C$ that do not contain the empty composition. 
\end{theorem}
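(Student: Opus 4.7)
The plan is to generalize the proof of Theorem~\ref{thm:one comp} by conditioning on the first entry $d_1$ of a composition $\d=(d_1,d_2,\ldots,d_r)$ of $n$ and tracking which of the patterns $\c^{(i)}$ can be ``started'' at position~$1$.

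First I would establish the following single-pattern observation: for any composition $\c$ with first entry $c_1$, the composition $\d$ avoids $\c$ if and only if $(d_2,\ldots,d_r)$ avoids $\c$ and, additionally, whenever $d_1\geq c_1$, the tail $(d_2,\ldots,d_r)$ avoids $\hat{\c}$. Moreover, avoidance of $\hat\c$ implies avoidance of $\c$, since any sub-composition of $(d_2,\ldots,d_r)$ dominating $\c$ yields, after deleting its leading entry, a sub-composition dominating $\hat\c$. Hence in the case $d_1\geq c_1$ the requirement collapses cleanly to ``$(d_2,\ldots,d_r)$ avoids $\hat\c$.''

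Applying this observation to every $\c^{(i)}\in C$ simultaneously, I would use the assumed ordering $c^{(1)}_1\leq c^{(2)}_1\leq\cdots\leq c^{(m)}_1$ to partition the allowable values $d_1\in\{1,\ldots,n\}$ into the intervals $[1,c^{(1)}_1-1]$, $[c^{(1)}_1,c^{(2)}_1-1]$, $\ldots$, $[c^{(m-1)}_1,c^{(m)}_1-1]$, $[c^{(m)}_1,n]$. When $d_1$ lies in the $j$-th such interval (so $c^{(j)}_1\leq d_1 < c^{(j+1)}_1$, with the convention $c^{(0)}_1:=1$ and $c^{(m+1)}_1:=n+1$), the tail $(d_2,\ldots,d_r)$ is forced to avoid $\hat{\c}^{(i)}$ for $i\leq j$ and $\c^{(i)}$ for $i>j$, i.e., exactly the set $\hat{C}^j$. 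Summing $b_{n-d_1}(\hat{C}^j)$ over each interval yields precisely the stated recurrence.

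Finally I would dispatch the base cases: every composition (including the empty one) trivially contains the empty composition, so $()\in C$ forces $b_r(C)=0$ for all $r$; otherwise the empty composition of $0$ vacuously avoids $C$, giving $b_0(C)=1$. The main subtlety I expect is ensuring the argument handles the degenerate situation in which some $\hat{\c}^{(i)}$ appearing in $\hat{C}^j$ is itself the empty composition, which occurs precisely when $\c^{(i)}$ has length one. The convention $b_r(\hat{C}^j)=0$ handles this correctly, as it captures exactly the fact that once $d_1\geq c^{(i)}_1$ the single-entry pattern $\c^{(i)}=(c^{(i)}_1)$ is already dominated and no tail can save the composition.
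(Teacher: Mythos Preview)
Your proposal is correct and follows essentially the same approach as the paper: both argue by conditioning on the first part $d_1$, partitioning its range according to the thresholds $c^{(1)}_1\le\cdots\le c^{(m)}_1$, and observing that on the $j$-th interval the tail must avoid exactly $\hat C^j$. Your write-up is somewhat more careful than the paper's (you explicitly verify the biconditional, note that avoiding $\hat\c$ implies avoiding $\c$, and check the degenerate length-one case), but the idea is identical.
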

\begin{proof}
    The proof follows in a similar way to that of Theorem~\ref{thm:one comp}, by considering the first element of a composition $\d$. If $\d=(d_1,d_2,\ldots,d_m)$ avoids all patterns in $C$ and $c_1^{(j)}\leq d_1<c_1^{(j+1)}$, then clearly $\d$ must avoid every pattern in $C^j$. This is because $d_1$ can act as the $c^{(i)}_1$ in any pattern $\c^{(i)}$ for $1\leq i\leq j$, but cannot act as the  $c^{(i)}_1$ in any pattern $\c^{(i)}$ for $j<i\leq m$ since the compositions $\c^{(i)}$ are ordered by their first element.
\end{proof}

\begin{example}
    Let's consider the set $C=\{(3,2),(6)\}$. Then $\hat{C}^1=\{(2),(6)\}$ and $\hat{C}^2 = \{(2),()\}$, which contains the empty composition. Notice that compositions that avoid the set $C$ are exactly those that avoid $(3,2)$ and that have no element greater than 5. Since $b_n(\hat{C}^1)=1$ and $b_n(\hat{C}^2) =0$, it is straightforward to see from Theorem~\ref{thm:mult comp} that \[b_n(C)=b_{n-1}(C)+b_{n-2}(C)+3,\] which implies that $b_n(C)=F_{n+2}+2F_n-3.$
\end{example}

Several other examples can be found in the proof of Theorem~\ref{thm:table} in the next section.

\section{Permutations avoiding the chain $(312,321: \sigma)$}\label{sec:312-321}


First, let us establish the form that permutations $\pi\in\S_n(312,321)$ can take. Let $\epsilon_1=1$, $\epsilon_2=21$, and $\epsilon_d=234\ldots d1$ for $d\geq 3$. Also, recall the definition of a direct sum of permutations. We say $\pi=\sigma\oplus\tau$ for some $\sigma\in\S_k$ and $\tau\in\S_m$ if $\pi_i=\sigma_i$ for $1\leq i\leq k$ and $\pi_i=\tau_{i-k}+k$ for $k+1\leq i\leq k+m.$ For example, $2134\oplus43215=213487659$. We can also take multiple sums. For example, if $\d=(2,5,3,1)$, then \[\bigoplus_{i=1}^4\epsilon_{d_i}=\epsilon_2\oplus\epsilon_5\oplus\epsilon_3\oplus\epsilon_1 = 21\oplus23451\oplus231\oplus1 = 21456739(10)8(11).\]
Note that the resulting permutation avoids the patterns 312 and 321. The next lemma establishes that this fact generalizes.
\begin{lemma}\label{lem:form}
    Let $n\geq 1$. A permutation $\pi\in\S_n$ avoids $312$ and $321$ if and only if $\pi = \bigoplus_{i=1}^m \epsilon_{d_i}$ for some composition $\d=(d_1,d_2,\ldots, d_m)$ of $n$.
\end{lemma}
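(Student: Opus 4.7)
The plan is to prove the two implications separately.

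For the forward direction, I first verify that each $\epsilon_d$ by itself avoids both $312$ and $321$: its entries $2,3,\ldots,d$ appear in increasing order and the only deviation is the trailing $1$, so every three-term subsequence is either increasing or is of the form $abc$ with $a<b$ and $c=1$, i.e.\ a $231$ pattern. Next, I observe that both $312$ and $321$ are sum-indecomposable, since in each of these patterns the maximum entry is in the first position and hence neither can be written as $\alpha\oplus\beta$ with both summands nonempty. A short argument then shows that if $\sigma\oplus\tau$ contains a sum-indecomposable pattern $\rho$, then one of $\sigma$ or $\tau$ must already contain $\rho$; otherwise the entries realizing $\rho$ would split into a nonempty prefix of values coming from $\sigma$ and a nonempty suffix of (strictly larger) values coming from $\tau$, inducing a nontrivial direct-sum decomposition of $\rho$. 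Iterating, any direct sum $\bigoplus_{i=1}^m \epsilon_{d_i}$ avoids both $312$ and $321$.

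For the reverse direction, I induct on $n$, with the trivial base $n=1$. Given $\pi\in\S_n$ avoiding $312$ and $321$, let $k$ be the position of the value $1$, so $\pi_k=1$. I establish two structural claims. First, the prefix $\pi_1\pi_2\cdots\pi_{k-1}$ is strictly increasing: any inversion $\pi_i>\pi_j$ with $i<j<k$ would, together with $\pi_k=1$, yield a $321$ pattern. Second, every entry to the left of position $k$ is smaller than every entry to its right: if $\pi_i>\pi_j$ for some $i<k<j$, then $\pi_i,\,1,\,\pi_j$ forms a $312$ pattern. Together these claims force the first $k-1$ entries to be the values $\{2,3,\ldots,k\}$ in increasing order, so $\pi_1\pi_2\cdots\pi_k$ is precisely $\epsilon_k$, and $\pi=\epsilon_k\oplus\pi'$ for some permutation $\pi'$ of length $n-k$ that still avoids $312$ and $321$. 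Applying the induction hypothesis to $\pi'$ and setting $d_1=k$ yields the desired composition $\d=(d_1,\ldots,d_m)$ of $n$.

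The main obstacle is really just cleanly verifying the two structural constraints on the prefix ending at the value $1$; once those are in hand, identifying that prefix with $\epsilon_k$ and stripping it off for induction is routine. The sum-indecomposability argument in the forward direction is standard and introduces no real difficulty, and no separate handling of the $k=1$ or $k=2$ cases is needed since $\epsilon_1=1$ and $\epsilon_2=21$ are exactly what the structural argument produces in those cases.
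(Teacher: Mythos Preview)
Your argument is correct and follows essentially the same approach as the paper: locate the position $k$ of the value $1$, use $312$-avoidance to force a direct-sum split at that position, and use $321$-avoidance to force the prefix to be $\epsilon_k$, then induct. You supply more detail than the paper (which omits the easy direction entirely and leaves the $321$-argument for the prefix implicit), and note that your labels ``forward'' and ``reverse'' are swapped relative to the usual reading of the biconditional, but the mathematical content is the same.
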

\begin{proof}
Suppose $\pi\in\S_n(312,321)$. Since $\pi$ avoids 312, if $\pi_i=1$, then $\pi = \sigma\oplus\tau$ where $\sigma\in\S_i(312,321)$ ends in 1 and $\tau \in \S_{n-i}(312,321).$ However, since $\pi$ avoids 321, we must have $\sigma=\epsilon_i.$ Thus the lemma follows inductively.
\end{proof}

It follows immediately that there are $2^{n-1}$ permutations that avoid 312 and 321 since this set of permutations is in bijection with the set of compositions of $n$. 
Note that since squaring respects direct sums, this lemma implies that $\pi^2 = \bigoplus_{d\in \d} \epsilon^2_d$, where  $\epsilon_1^2 =1$, $\epsilon_2^2=12,$ and $\epsilon_d^2 = 34\ldots d12$ for $d\geq 3.$ 

Let $\Omega_n$ be the set of permutations $\sigma\in\S_n$ so that, when written as a direct sum of its simple parts, we have \[
 \sigma = \bigoplus_{i=1}^k \tau^{(i)}
 \] where $\tau^{(i)} \in\{1\}\cup\{23\ldots m1 : m\geq 2 \} \cup \{34\ldots m12: m \geq 3\}$ for each $i$, and let $\Omega = \bigcup_{n\geq 1}\Omega_n.$

In the next lemma, we show that we need only consider $\sigma\in \Omega$ in this section since these are exactly those patterns that can appear in $\pi^2$. Indeed, this lemma implies that if $\sigma \not\in\Omega$, then $a_n(312,321:\sigma)=2^{n-1}$ since for all $\pi \in \S_n(312,321),$ we would have that $\pi^2$ avoids $\sigma.$
\begin{lemma}
    For $\pi \in \S_n(312,321),$ $\pi^2$ can only contain patterns in $\Omega.$
\end{lemma}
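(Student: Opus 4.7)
The plan is direct from the setup. By Lemma~\ref{lem:form}, write $\pi = \bigoplus_{i=1}^m \epsilon_{d_i}$ for a composition $(d_1,\ldots,d_m)$ of $n$. Since squaring respects direct sums, $\pi^2 = \bigoplus_{i=1}^m \epsilon_{d_i}^2$, with each block belonging to $\{1,\,12\} \cup \{34\ldots d\,12 : d \geq 3\}$. Any subpattern $\sigma$ of $\pi^2$ arises by selecting positions; because the blocks are direct summands (so their value ranges are disjoint consecutive intervals), $\sigma$ decomposes as $\sigma = \bigoplus_i \nu^{(i)}$, where $\nu^{(i)}$ is the pattern of the chosen subsequence inside block $i$ and empty blocks are dropped. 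Since $\Omega$ is closed under direct sums---its defining condition only constrains the sum-indecomposable parts to come from a fixed allowed set---it suffices to show each $\nu^{(i)}$ lies in $\Omega$.

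The main (and essentially only) step is a case analysis on the subpatterns of a single block $\epsilon_d^2 = 34\ldots d\,12$ for $d \geq 3$; the cases $d=1,2$ yield only $1$ or $12 = 1\oplus 1$, both in $\Omega$. Let $a$ denote the number of picked values from the increasing prefix $\{3,4,\ldots,d\}$, and condition on which of the two trailing small values $1,2$ are also picked. If neither is picked, the reduction is the increasing pattern of length $a$, namely $1 \oplus 1 \oplus \cdots \oplus 1$. If exactly one of $\{1,2\}$ is picked, the reduction is either $1$ (when $a=0$) or $23\ldots(a{+}1)\,1$ (when $a \geq 1$), which belongs to the allowed family $\{23\ldots m\,1 : m \geq 2\}$. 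If both are picked, the reduction is either $12 = 1 \oplus 1$ (when $a=0$) or $34\ldots(a{+}2)\,12$ (when $a \geq 1$), with the latter in $\{34\ldots m\,12 : m \geq 3\}$. In every case the reduced pattern lies in $\Omega$.

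Combining, $\sigma = \bigoplus_i \nu^{(i)}$ is a direct sum of elements of $\Omega$ and hence in $\Omega$, which is what we want. No real obstacle is anticipated: the only point requiring attention is the case analysis above, and it is forced as soon as one notes that the only non-monotone elements of $\epsilon_d^2$ are the trailing $1$ and $2$.
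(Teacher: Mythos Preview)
Your proof is correct and follows essentially the same approach as the paper: both reduce to the observation that every subpattern of a single block $\epsilon_d^2$ lies in $\Omega$, and then assemble across blocks via direct sum. The only cosmetic difference is the order of decomposition---the paper first splits $\sigma$ into its sum-indecomposable parts and argues each must land in a single block, whereas you first split the chosen positions by blocks and then invoke closure of $\Omega$ under $\oplus$; your case analysis on the trailing $1,2$ makes explicit what the paper states in one line.
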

\begin{proof}
    Suppose $\pi \in\S_{n}(312,321)$. By Lemma~\ref{lem:form}, $\pi = \bigoplus_{i=1}^m \epsilon_{d_i}$ for some $\d=(d_1,\ldots, d_m)$, composition of $n$, and thus $\pi^2 = \bigoplus_{i=1}^m \epsilon^2_{d_i}$. Let $\sigma$ be a pattern contained in $\pi^2$ and write $\sigma$ as a direct sum of its simple parts as 
    \[\sigma=\bigoplus_{i=1}^k \tau^{(i)}.\]

    Since our assumption is that $\tau^{(i)}$ cannot be written as a direct sum of smaller permutations, it must be the case that $\tau^{(i)}$ is a pattern contained in $\epsilon^2_{d_j}$ for some $j$. However, the possible patterns contained in $\epsilon^2_{d}$ for any $d$ are: the increasing permutation $12\ldots r$ for some $r\geq 1$, the permutation $23\ldots r1$ for some $r\geq 2$, or $34\ldots r12$ for some $r\geq 3.$ The result follows.
\end{proof}

Let us note that the number of permutations in the set $\Omega_n$ satisfies the recurrence
\[
|\Omega_n| = |\Omega_{n-1}| + \sum_{i=2}^n |\Omega_{n-i}| + \sum_{i=3}^n |\Omega_{n-i}|,
\] 
 found by considering the first element of the direct sum, with $|\Omega_0|=1$, and is thus given by the sequence A052980 from OEIS. We have that $\Omega_1=\{1\}$, $\Omega_2=\{12,21\}$, $\Omega_3=\{123,132,213,231,312\}$, and 
 \[
 \Omega_4 = \{1234, 1243, 1324, 1342, 1423, 2134, 2143, 2314, 2341, 3124, 3412\}.
 \]

 For any pattern $\sigma\in\Omega,$ we can construct a set of compositions, which we will denote $C(\sigma)$. Let us first consider the case when $\sigma = 12\ldots k = \bigoplus_{i=1}^k 1$. Then the set of compositions $C(\sigma)$ are exactly those compositions obtained by taking all compositions of $k$ and adding 2 to any element greater than or equal to 3. (There are thus $2^{k-1}$ compositions in $C(12\ldots k)$.)

 \begin{example}
     Consider $\sigma=12345$. There are 16 compositions of 5. Taking each composition and adding 2 to each element greater than or equal to 3 yields 
     \begin{align*}
        C(\sigma)  = & \, \{(1,1,1,1,1), (1,1,1,2), (1,1,2,1),(1,2,1,1), (2,1,1,1),(1,2,2),(2,1,2),(2,2,1), \\ &\, (1,1,5), (1,5,1), (5,1,1),(2,5),(5,2),(1,6),(6,1),(7)\}
     \end{align*}
     Notice that if you take $\pi = \bigoplus_{i=1}^m \epsilon_{d_i}$ for any $\d=(d_1,\ldots, d_m)\in C(12345)$, $\pi^2$ will contain a $\sigma=12345$ pattern. For example, if $\d=(5,1,1)$, then $\pi = 23451 \oplus 1\oplus 1 = 2345167$ and $\pi^2 = 34512\oplus 1\oplus 1 = 3451267$, which contains the subsequence 34567, which is a 12345 pattern.
 \end{example}
 
 Now let us consider a more general pattern $\sigma = \bigoplus_{i=1}^k \tau^{(i)}.$
 First construct a composition $\c(\sigma) = (c_1,\ldots, c_k)$
 where $c_i = |\tau^{(i)}|$ if $\tau^{(i)} \in\{1\}\cup \{34\ldots m12: m \geq 3\}$ and  $c_i = |\tau^{(i)}|+1$ if $\tau^{(i)} \in \{23\ldots m1: m \geq 2\}$. This composition will be one of the compositions in the set $C(\sigma).$
 Now, for every consecutive run of 1s in $\c(\sigma)$ of length $r$, the composition obtained by replacing that run of 1s by any of the $2^{r-1}$ compositions in $C(12\ldots r)$ will also be in the set $C(\sigma).$ Each run of 1s should be considered simultaneously. 

 \begin{example}
     Suppose $\sigma = 2341567(10)89(11)(12) =2341\oplus1\oplus 1\oplus 1 \oplus 312 \oplus 1 \oplus 1$. 
     Then $\c(\sigma) = (5,1,1,1,3,1,1)$ and \begin{align*}
        C(\sigma)  = & \, \{(5,1,1,1,3,1,1), (5,1,1,1,3,2), (5,2,1,3,1,1), (5,2,1,3,2), \\
        & \, (5,1,2,3,1,1), (5,1,2,3,2), (5,5,3,1,1), (5,5,3,2)\}.
     \end{align*}
 \end{example}

We are now ready to state the main theorem of this paper. 

\begin{theorem}
    For $n\geq 1$, the number of permutations in $\S_n$ that avoid the chain $(312,321: \sigma)$ for any $\sigma\in\Omega$ is equal to the number of composition of $n$ that avoid the set $C(\sigma).$ 
\end{theorem}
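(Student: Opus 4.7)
The plan is to combine Lemma~\ref{lem:form} with a classification of which direct sums can embed in a single block $\epsilon_d^2$. By Lemma~\ref{lem:form} the map $\d \mapsto \pi_{\d} := \bigoplus_i \epsilon_{d_i}$ is a bijection between compositions of $n$ and $\S_n(312,321)$, and since squaring respects direct sums $\pi_{\d}^2 = \bigoplus_i \epsilon_{d_i}^2$. It therefore suffices to show that $\pi_{\d}^2$ contains $\sigma$ if and only if $\d$ contains some composition in $C(\sigma)$.

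First I would classify all patterns contained in a single block. For $d \geq 3$, working directly with $\epsilon_d^2 = 34\ldots d\,12$ and splitting on whether a selected sub-sequence includes the trailing $1$ and/or $2$, the contained patterns are exactly $12\ldots k$ for $k \leq \max(d-2, 2)$, $23\ldots l\,1$ for $l \leq d-1$, and $34\ldots l\,12$ for $l \leq d$. The key observation is that among these the only sum-decomposable patterns are the identities $12\ldots k = 1^{\oplus k}$; every other pattern on the list is sum-indecomposable. The cases $d=1,2$ are immediate.

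Next, writing $\sigma = \bigoplus_{i=1}^k \tau^{(i)}$ in its sum-indecomposable decomposition, I would show each $\tau^{(i)}$ must embed inside a single block $\epsilon_{d_{b(i)}}^2$: otherwise the value/position separation between distinct blocks of $\pi_{\d}^2$ would force $\tau^{(i)}$ to decompose as a direct sum. The assignment $b$ is then non-decreasing in $i$, and each block $j$ must contain $\bigoplus_{i \in b^{-1}(j)} \tau^{(i)}$ as a pattern of $\epsilon_{d_j}^2$. By the classification this leaves two options: either $|b^{-1}(j)| \leq 1$ with the single part forcing $d_j \geq c_i$ (the entry of $\c(\sigma)$ attached to that part, which matches the definition: $c_i = m$ when $\tau^{(i)} = 34\ldots m\,12$ and $c_i = m+1$ when $\tau^{(i)} = 23\ldots m\,1$), or $|b^{-1}(j)| = p \geq 2$ with every part in $b^{-1}(j)$ equal to $1$, in which case the minimum admissible $d_j$ accommodating $12\ldots p$ is $p$ for $p \leq 2$ and $p+2$ for $p \geq 3$.

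Finally I would match the ordered tuples of minimum block sizes arising from all valid $b$'s with the set $C(\sigma)$. Varying $b$ amounts to choosing, for each maximal run of $1$'s in $\c(\sigma)$ of length $r$, a composition $(e_1,\ldots,e_j)$ of $r$ recording how the $r$ singleton parts are grouped into blocks, and replacing that run by $(e_1',\ldots,e_j')$ with $e_l' = e_l$ if $e_l \leq 2$ and $e_l' = e_l + 2$ if $e_l \geq 3$; the remaining (non-$1$) entries of $\c(\sigma)$ are preserved. This is exactly the procedure defining $C(12\ldots r)$ on each run, so the collection of attainable tuples is precisely $C(\sigma)$. Hence $\pi_{\d}^2$ contains $\sigma$ iff $\d$ contains some composition in $C(\sigma)$, and the theorem follows. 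The main obstacle I anticipate is rigorously verifying the classification of patterns in $\epsilon_d^2$ and excluding any ``mixed'' direct sum (a non-$1$ sum-indecomposable part together with anything else) from appearing inside a single block, since it is exactly this exclusion that restricts the merging of consecutive $\tau^{(i)}$'s to runs of $1$'s and produces the $C(12\ldots r)$ rule.
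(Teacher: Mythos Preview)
Your proposal is correct and follows essentially the same route as the paper's proof: both reduce the question to showing that $\pi_{\d}^2$ contains $\sigma$ iff $\d$ contains some element of $C(\sigma)$, both argue that each sum-indecomposable summand $\tau^{(i)}$ of $\sigma$ must land in a single block $\epsilon_{d_j}^2$, and both then read off the minimum block sizes and match them with the construction of $C(\sigma)$. Your version is in fact more explicit than the paper's in two places: you actually derive the full list of patterns occurring in $\epsilon_d^2$ (the paper simply asserts ``a single block of $\pi^2$ can only contain either one block of $\sigma$, or a consecutive sequence of blocks of size~1''), and you spell out why the collection of minimum-size tuples over all valid groupings coincides exactly with $C(\sigma)$ via the run-of-$1$'s substitution rule. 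The ``main obstacle'' you flag---ruling out a mixed direct sum inside one block---is precisely what your classification handles, since the only sum-decomposable patterns in $\epsilon_d^2$ are the identities.
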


\begin{proof}
    Suppose that $\pi=\bigoplus_{i=1}^m \epsilon_{d_i}$ with $\pi^2=\bigoplus_{i=1}^m \epsilon^2_{d_i}$ for some $d_1+\cdots+d_m=n$ and that $\sigma = \bigoplus_{i=1}^k \tau^{(i)}$. We claim that $\pi$ avoids the chain $(312,321:\sigma)$ if and only if the composition $\d = (d_1,d_2,\ldots, d_m)$ avoids all compositions in $C(\sigma).$ 

    Since $\epsilon_d^2 = 34\ldots d12$, a single block of $\pi^2$ can only contain either one block of $\sigma$, or a consecutive sequence of blocks of size 1 (i.e., an increasing pattern). Additionally, $\epsilon_d^2$ must be length $d\geq k$ to contain $34\ldots k12$, length $d\geq k+1$ to contain $23\ldots k 1$, and length $d\geq k+2$ to contain $12\ldots k$ for $k\geq 3$ (and only length $d\geq k$ for $k\in\{1,2\}$). Note that $\epsilon_d^2$ contains 1 if $|\epsilon_d|\geq 1$,  contains 12 if $|\epsilon_d|\geq 2$, and contains $12\ldots r$ for $r\geq 3$ only if $|\epsilon_d|\geq r+2$, and so ``adding 2'' to any part greater than 2 of a composition of length  $k$ will determine exactly the lengths needed for $\pi^2$ to avoid $12\ldots k.$

    Now, let us consider the case when $\pi^2$ contains $\sigma.$ In this case, each block $\tau^{(i)}$ with $|\tau^{(i)}|>1$ must lie within a single block $\epsilon_{d_j}^2$ of $\pi^2$ and any increasing sequence of $\tau^{(i)}$s of size 1 must appear across blocks in $\pi^2.$ By construction of $C(\sigma),$ each $c_j$ is the minimum block length required to contain either a block $\tau^{(i)}$ or an increasing run of 1s, and thus the composition $\d$ will  dominate some $\c \in C(\sigma).$ 
    
    Similarly, if $\d$ contains some $\c\in C(\sigma),$ then by definition, there exist indices $i_1 < i_2 < \dots < i_r$ such that $d_{i_j} \geq c_j$ for each $j$. As observed above, by the construction of $C(\sigma)$, each $c_j$ is the minimum block length required to contain either a block $\tau^{(i)}$ or an increasing run of 1s. Because $d_{i_j} \geq c_j$, each block of the form $\epsilon_{d_{i_j}}^2$ is sufficiently long to contain the corresponding part of $\sigma$, and so $\pi^2$ contains $\sigma.$
\end{proof}

Let us note that the number of permutations that avoid $(312,321:\sigma)$ is equal to the number that avoid $(312,321:\sigma^{rci})$, where $\sigma^{rci}$ is the reverse-complement-inverse of $\sigma,$ as follows from \cite[Remark~4.1]{AG24}.

\renewcommand{\arraystretch}{2}
\begin{table}
    \centering
    \begin{tabular}{|c|c|c|c|}
    \hline
    $\sigma$ & $C(\sigma)$ &  $a_n(312,321:\sigma)$ & OEIS \\ \hline \hline
       123  & $(1,1,1), (1,2), (2,1), (5)$ &  0 (for $n\geq 5$)  & N/A\\ \hline
       132  &  (1,3) & \multirow{2}{*}{$F_{n+2}-1$} & \multirow{2}{*}{A000071} \\ \cline{1-2}
       213 & (3,1) & & \\ \hline
       231 & (4) & $T_{n+2}$ & A000073 \\ \hline
       312 & (3) & $F_{n+1}$ & A000045 \\ \hline \hline
       \multirow{2}{*}{1234} & $(1,1,1,1), (1,1,2),(1,2,1),$ &  \multirow{2}{*}{0 (for $n\geq 6$) } &  \multirow{2}{*}{N/A} \\ 
       &$(2,1,1),(2,2),(5,1), (1,5),(6)$ && \\ \hline
       1243 & $(1,1,3), (2,3)$ & \multirow{2}{*}{$2F_{n+1}-2$} & \multirow{2}{*}{A019274} \\ \cline{1-2}
       2134 & $(3,1,1), (3,2)$ & & \\ \hline
       1324 & (1,3,1) & $F_{n+3}-n-1$ & A000126 \\ \hline
       1342 & (1,4) & \multirow{2}{*}{$\displaystyle\sum_{i=1}^{n+1} T_i$}  & \multirow{2}{*}{A008937}\\ \cline{1-2}
       2314 & (4,1) & &\\ \hline
       1423 & (1,3) & \multirow{2}{*}{$F_{n+2}-1$}  & \multirow{2}{*}{A000071}\\ \cline{1-2}
       3124 & (3,1) & &\\ \hline
       2143 & (3,3) & $1+\frac{1}{5}(nF_{n+2} + (n-3)F_n)$ & $\text{A023610}+1$\\ \hline
       2341 & (5) & $Q_{n+3}$ & A000078 \\ \hline
       3412 & (4) & $T_{n+2}$ & A000073 \\ \hline
    \end{tabular}
    \caption{The number $a_n(312,321:\sigma)$ of permutations that avoid $312$ and $321$ whose square avoids $\sigma\in\Omega_k$ for $k\in\{3,4\}$ and $n\geq 2$ (unless otherwise stated), along with the associated composition for $\sigma.$ Here, $F_n$ denotes the $n$-th Fibonacci number, $T_n$ denotes the $n$-th Tribonacci number, and $Q_n$ denotes the $n$-th Tetranacci number.}
    \label{tab:321}
\end{table}

\begin{theorem}\label{thm:table}
    For $n\geq 2$, the values of $a_n(312,321:\sigma)$ in Table~\ref{tab:321} hold. 
\end{theorem}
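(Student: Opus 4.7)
The approach is to invoke the main theorem of this section, which identifies $a_n(312,321:\sigma)$ with $b_n(C(\sigma))$, so that each row of Table~\ref{tab:321} reduces to a composition-avoidance count. I would then apply Theorem~\ref{thm:one comp} when $|C(\sigma)|=1$ and Theorem~\ref{thm:mult comp} otherwise, derive a recurrence in each case, and either solve it or identify it with the claimed closed form.

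For the rows with $C(\sigma) = \{(k+1)\}$, namely $\sigma \in \{312, 231, 3412, 2341\}$, the result is immediate from Example~\ref{ex:Fibonacci}, giving the stated Fibonacci, Tribonacci, and Tetranacci values. The remaining single-composition rows reduce to telescoping sums. When $\c = (1,\hat\c)$ (as with $\sigma \in \{132, 1423, 1342, 1324\}$), Theorem~\ref{thm:one comp} simplifies to $b_n(\c) = \sum_{i=1}^n b_{n-i}(\hat\c)$, and the inner counts are known Fibonacci-type numbers, possibly inherited from earlier rows of the table itself (for instance, $b_n((1,3,1))$ telescopes against $b_m((3,1)) = F_{m+2}-1$ to yield $F_{n+3}-n-1$). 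The symmetric cases with $\c$ ending in $1$ (as with $\sigma \in \{213, 3124, 2314\}$) produce a two-term recurrence whose forcing sum collapses to a constant, handled analogously.

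For the multi-composition rows, I would apply Theorem~\ref{thm:mult comp} after sorting $C(\sigma)$ by first element. The entries for $\sigma = 123$ and $\sigma = 1234$ vanish for large $n$ because the hat sets $\hat{C}^j$ rapidly acquire the empty composition, forcing $b_n(C(\sigma)) = 0$; direct enumeration handles the small $n$ values. For $\sigma \in \{1243, 2134\}$ the theorem yields an inhomogeneous Fibonacci recurrence whose forcing term I compute by evaluating $b_m$ on the smaller hat sets (e.g.\ $b_m(\{(1,1),(2)\})$ is $1$ for $m\in\{0,1\}$ and $0$ otherwise), and in both cases the recurrence resolves to $2F_{n+1}-2$.

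The main obstacle is the row for $\sigma = 2143$ with $C(\sigma) = \{(3,3)\}$. Here Theorem~\ref{thm:one comp} combined with $b_m((3)) = F_{m+1}$ and $\sum_{j=1}^{n-2}F_j = F_n - 1$ produces the inhomogeneous Fibonacci recurrence $b_n = b_{n-1} + b_{n-2} + F_n - 1$. I would solve it by the standard ansatz of adding to a homogeneous Fibonacci part a particular solution of the form $\alpha n F_{n+2} + \beta n F_n + \gamma$, fixing constants with the initial conditions $b_1 = 1$ and $b_2 = 2$; this produces the closed form $1 + \tfrac{1}{5}(nF_{n+2} + (n-3)F_n)$. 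The OEIS identifications in the last column of the table then follow by inspection against the listed sequences.
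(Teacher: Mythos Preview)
Your proposal is correct and follows essentially the same route as the paper: reduce via the main theorem to $b_n(C(\sigma))$, then apply Theorem~\ref{thm:one comp} or Theorem~\ref{thm:mult comp} case by case, with the $\sigma=2143$ row handled by solving the inhomogeneous Fibonacci recurrence $b_n=b_{n-1}+b_{n-2}+F_n-1$.

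The only noteworthy differences are organizational. For the pairs $\{132,213\}$, $\{1243,2134\}$, $\{1342,2314\}$, $\{1423,3124\}$ the paper computes one member and obtains the other from the reverse-complement-inverse symmetry noted just before the table, whereas you propose to run the recurrence directly on both (your ``two-term recurrence'' remark is slightly off for $\sigma=2314$, since $C(2314)=\{(4,1)\}$ gives a Tribonacci-type three-term homogeneous part, but the forcing sum still collapses to a constant and the argument goes through). For $\sigma\in\{123,1234\}$ the paper argues the vanishing combinatorially (bounding the number and size of parts), while you propose to see it from the recursion once the hat sets pick up $(1)$ or the empty composition; both are valid and equally short.
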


\begin{proof}
    Let us consider the cases when $|\sigma|=3$ first. There are only five permutations in $\Omega_3$ to consider since $321\not\in\Omega_3$.
    For the pattern $\sigma=123$, we need to avoid \[C(123)=\{(1,1,1), (1,2), (2,1), (5)\}.\] It is clear that for $n\geq 5,$ it is impossible for a composition to avoid all of these patterns, so there are 0 permutations avoiding the chain $(312,321:123).$ 

    Next, consider those that avoid 132. In this case, $a_n(312,321:132)$ is equal to the number that avoid the composition $(1,3)$. By Theorem~\ref{thm:one comp}, there are $\sum_{i=1}^n b_{n-i}((3))$ such compositions. Since $b_n((3)) = F_{n+1},$ the $(n+1)$-st Fibonacci number, the result follows by a well-known identity about Fibonacci numbers (see OEIS A000071). Since $213$ is the reverse-complement-inverse of 132, the answer is the same for $a_n(312,321:213)$.
    For $\sigma=231,$ we need only count those compositions that avoid the composition $(4)$, which is $T_{n+2}$ and for $\sigma = 312$, we need only count those compositions that avoid $(3),$ which is $F_{n+1}.$

    Next let us consider those permutations in $\S_n(312,321:\sigma)$ with $|\sigma|=4.$ In this case, there are 11 cases to consider, but only 8 up to reverse-complement-inverse symmetry. For $\sigma=1234,$ we must enumerate compositions that avoid 
    \[
    C(1234) = \{(1,1,1,1), (1,1,2), (1,2,1), (2,1,1), (2,2), (1,5), (5,1), (6)\}.
    \] As before, it is clear this is impossible for $n\geq 6.$ In particular, any composition avoiding those compositions in $C(1234)$ must have at most 3 parts of size at most 5 since $(1,1,1,1), (6)\in C(1234)$, and only one part can be greater than 1 since $(2,2)\in C(1234)$. This excludes any compositions of $n\geq 8$, and the cases for $n\in\{6,7\}$ are straightforward to check directly. 
    
    Let us next consider $\sigma=1243.$ This amounts to enumerating compositions that avoid the compositions in $C(1243)=\{(1,1,3),(2,3)\}$. By Theorem~\ref{thm:mult comp}, the number of such compositions is given by \[b_{n-1}((1,3)) + \sum_{i=2}^n b_{n-i}(\{(1,3),(3)\}).\] We saw above that $b_{n-1}((1,3)) = F_{n+1}-1$ and that $b_{n-i}(\{(1,3),(3)\}) = b_{n-i}((3)) = F_{n-i+1}$, which has a sum of $F_{n+1}-1$, so in total there are $2F_{n+1}-2.$ Since $2134=1243^{rci},$ the result is the same for $2134.$ 

    Next, let's consider $\sigma=1324$, which has $C(1324)=\{(1,3,1)\}.$ The number of compositions that avoid $(1,3,1)$ satisfy
    \[
    b_n((1,3,1))=\sum_{i=1}^n b_{n-i}((3,1)),
    \] with $b_0((3,1))=1$ and  $b_n((3,1))=F_{n+2}-1$ for $n\geq 1$, as we found previously. By computing the sum, we get that $b_n((1,3,1))= F_{n+3}-n-1$.
    Now, let us consider the case when $\sigma=1342,$ corresponding to $C(1342)=\{(1,4)\}.$ Similar to the case when $\sigma=132$, we get that there are $\sum_{i=1}^n b_{n-i}((4))$ such compositions. Since $b_n((4))= T_{n+2}$, we get the sum $\sum_{i=1}^n T_{n-i+2}$, which can be reindexed to the sum in the table. Since $2314=1342^{rci}$, the same result holds for $\sigma=2314.$
    For $\sigma=1423$ and $\sigma=3124$, the result follows since $C(1423)=C(132)$ and $C(3124)=C(213).$

    Now, for the case when $\sigma=2143,$ we have $C(\sigma) = \{(3,3)\}$ and so 
    \[
    b_n((3,3))= b_{n-1}((3,3))+b_{n-2}((3,3)) + \sum_{i=3}^n b_{n-i}((3)),
    \]
    where $b_{n}((3)) = F_{n+1}.$ Thus if we let $b_n:=b_n((3,3)),$ we have $b_n=b_{n-1}+b_{n-2}+F_n-1.$ Solving this recurrence gives us the formula in the table.
    Finally, the cases when $\sigma=2341$ and $\sigma=3412$ follow immediately from the fact that $C(2341)=\{(5)\}$ and $C(3412)=\{(4)\}.$
\end{proof}

We conclude this section with a few more cases for when $\sigma$ takes a certain form and the answer is straightforward.

\begin{proposition}\label{prop:12-k}
    For $k\geq 3$ and $n> (k-1)^2$, we have $a_n(312,321:12\ldots k)=0.$
\end{proposition}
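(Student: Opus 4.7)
The plan is to combine the structural Lemma~\ref{lem:form} with a direct bound on the longest increasing subsequence (LIS) of $\pi^2$. By Lemma~\ref{lem:form}, every $\pi \in \S_n(312, 321)$ decomposes as $\pi = \bigoplus_{i=1}^m \epsilon_{d_i}$ for a composition $(d_1, \ldots, d_m)$ of $n$, so $\pi^2 = \bigoplus_{i=1}^m \epsilon_{d_i}^2$. Containing $12\ldots k$ just means having an increasing subsequence of length $k$, and the LIS of a direct sum of permutations is the sum of the LIS of the summands. Hence $\pi^2$ avoids $12\ldots k$ if and only if $\sum_{i=1}^m \ell_i \leq k-1$, where $\ell_i := \mathrm{LIS}(\epsilon_{d_i}^2)$.

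Next I would compute $\ell_i$ block-by-block. For $d\in\{1,2\}$ we have $\ell = d$, and for $d\geq 3$ we have $\epsilon_d^2 = 34\cdots d\, 1\, 2$, whose longest increasing subsequence is either the ascending run $34\cdots d$ of length $d-2$ or the pair $12$, giving $\ell = \max(d-2,\,2)$. The crucial consequence is the uniform inequality $d \leq 2\ell$ for all $d \geq 1$ (with equality at $d=4$), which reduces to a short case check on $d \in \{1,2,3,4\}$ and the bound $d \leq 2(d-2)$ for $d \geq 5$.

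Summing the block-wise inequality then yields
\[
n \;=\; \sum_{i=1}^m d_i \;\leq\; 2\sum_{i=1}^m \ell_i \;\leq\; 2(k-1).
\]
Since $2(k-1) \leq (k-1)^2$ precisely when $k \geq 3$, any $n > (k-1)^2$ forces $n > 2(k-1)$, contradicting the bound above. Therefore no $\pi \in \S_n(312,321)$ can have $\pi^2$ avoid $12\ldots k$, giving $a_n(312,321:12\ldots k) = 0$, as claimed. The only step requiring any work is the LIS computation for $\epsilon_d^2$ and verifying $d \leq 2\ell$; the additivity of LIS over direct sums and the final inequality are routine.
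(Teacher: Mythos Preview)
Your proof is correct and actually establishes more than the proposition asks: you show $a_n(312,321:12\ldots k)=0$ whenever $n>2(k-1)$, which for $k\geq 3$ is a strictly sharper threshold than the paper's $(k-1)^2$. The paper itself remarks immediately after the proposition that its bound is not tight, and your argument explains why.

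The route is genuinely different from the paper's. The paper works inside the composition-avoidance framework of Section~\ref{sec:compositions}: it observes that $C(12\ldots k)$ contains the all-ones composition of length $k$ (forcing at most $k-1$ parts) together with a single-part composition bounding the part sizes, and multiplies the two constraints. You instead bypass $C(\sigma)$ entirely, computing $\mathrm{LIS}(\epsilon_d^2)$ block by block and using the pointwise inequality $d\leq 2\,\mathrm{LIS}(\epsilon_d^2)$, which is tight at $d=4$. The advantage of your approach is the improved bound and the fact that it makes transparent exactly which compositions come close to extremal (those with all parts equal to $4$); the advantage of the paper's approach is that it stays within the machinery already built, illustrating the use of $C(\sigma)$. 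Both are short, but yours is the cleaner and stronger argument here.
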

\begin{proof}
    Note that $C(12\ldots k)$ contains both the compositions $(1,1,\ldots, 1)$ of length $k$ and the composition $(k)$. However, this implies that a composition that avoids those compositions in $C(12\ldots k)$ must have at most $k-1$ parts of size at most $k-1$. If $n>(k-1)^2$, this is impossible and thus there are zero compositions avoiding $C(12\ldots k)$, implying there are zero permutations avoiding the chain $(312,321:12\ldots k).$
\end{proof}

The bound $n > (k-1)^2$ given in Proposition~\ref{prop:12-k} is sufficient to show that $a_n(312,321:12\dots k)$ is eventually zero, but it is not generally tight. For example, for $k=4$, the upper bound guarantees $a_n = 0$ for $n > 9$, but as stated in Table~\ref{tab:321}, we have that $a_n = 0$ for $n \geq 6$.

\begin{proposition}
    For $n\geq 1$ and $k\geq 3$ ,
    \[a_n(312,321:\sigma)=\begin{cases} F^{(k)}_{n+k-1} & \sigma=23\ldots k1 \\[4pt]
     F^{(k-1)}_{n+k-2} & \sigma=34\ldots k12,
    \end{cases}\] where $F^{(k)}_n$ denotes the $n$-th $k$-nacci number. 
\end{proposition}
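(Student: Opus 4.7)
The plan is to reduce each case to the composition-counting formula from Example~\ref{ex:Fibonacci} via the main theorem of Section~\ref{sec:312-321}. Both patterns in question are simple (they cannot be written as nontrivial direct sums), so in the decomposition $\sigma = \bigoplus_{i=1}^k \tau^{(i)}$ used to define $C(\sigma)$, there is only a single block $\tau^{(1)} = \sigma$. Hence $C(\sigma)$ consists of a single composition of length one, and the counting reduces to avoiding one part-size bound.

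First I would handle $\sigma = 23\ldots k1$. Since $\sigma$ lies in the family $\{23\ldots m1 : m\geq 2\}$, the construction of $C(\sigma)$ assigns $c_1 = |\sigma|+1 = k+1$, so $C(\sigma) = \{(k+1)\}$. By the main theorem, $a_n(312,321:\sigma) = b_n((k+1))$, and Example~\ref{ex:Fibonacci} gives $b_n((k+1)) = F^{(k)}_{n+k-1}$, which is exactly the first formula.

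Next I would handle $\sigma = 34\ldots k12$. Since $\sigma$ lies in the family $\{34\ldots m12 : m\geq 3\}$, the construction assigns $c_1 = |\sigma| = k$, so $C(\sigma) = \{(k)\}$. Again by the main theorem, $a_n(312,321:\sigma) = b_n((k))$, and Example~\ref{ex:Fibonacci} (applied with $k$ replaced by $k-1$) gives $b_n((k)) = F^{(k-1)}_{n+(k-1)-1} = F^{(k-1)}_{n+k-2}$, which is the second formula.

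There is no real obstacle here: once one correctly identifies how each simple block contributes to $\c(\sigma)$ (the extra $+1$ in the first case reflecting that $\epsilon_d^2$ needs length $d\geq k+1$ to contain $23\ldots k1$, versus only $d\geq k$ to contain $34\ldots k12$), both statements follow by citing Example~\ref{ex:Fibonacci}. The only bookkeeping point worth flagging is the shift in the $k$-nacci index when moving between $b_n((k+1))$ and $b_n((k))$, which accounts for the different generalized-Fibonacci orders $(k)$ vs.\ $(k-1)$ appearing in the two cases.
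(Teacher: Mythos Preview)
Your proof is correct and follows essentially the same approach as the paper's own proof: identify $C(\sigma)=\{(k+1)\}$ in the first case and $C(\sigma)=\{(k)\}$ in the second, then invoke Example~\ref{ex:Fibonacci}. You have simply made explicit the reasons (sum-indecomposability of $\sigma$ and the $+1$ rule for blocks of the form $23\ldots m1$) that the paper leaves implicit.
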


\begin{proof}
    In the case $\sigma=23\ldots k1$, we have $C(\sigma)=\{(k+1)\}$ and in the case that $\sigma = 34\ldots k12$, we have that $C(\sigma)=\{(k)\}$, and thus the result clearly follows (see Example~\ref{ex:Fibonacci}).
\end{proof}

Finally, let us note that it is clear from the previous section that to enumerate permutations avoiding the chain $(312,321:S)$ for any set $S$ of permutations, it is enough to enumerate those compositions that avoid $\bigcup_{\sigma\in S} C(\sigma)$. 



\section{Permutations avoiding the chain $(312,4321:\sigma)$ for $\sigma\in\S_3$}\label{sec:312-4321}

In the case that $\pi$ avoids both $312$ and $4321,$ the form $\pi$ takes is not quite as nice as the case in the previous section. However, we can still write $\pi$ as a direct sum. In particular $\pi = \rho\oplus \tau$ where $\rho\in\S_k(312,4321)$ ends in 1 and $\tau \in \S_{n-k}(312,4321)$ for some $1\leq k \leq n$. Note that $\tau$ could be the empty permutation, but $\sigma$ has length $k\geq 1.$
Table~\ref{tab:4321} summarizes the results of this section.

\renewcommand{\arraystretch}{2}
\begin{table}[h!]
    \centering
    \begin{tabular}{|c|c|c|c|}
    \hline
    $\sigma$ &  $a_n(312,4321:\sigma)$ & OEIS & Theorem \\ \hline \hline
       123  &  eventually 0  & N/A & Theorem~\ref{thm:123} \\ \hline
       132   & \multirow{2}{*}{$T_{n+2}+T_{n+1}-1$} & \multirow{2}{*}{$\text{A001590}-1$} & Theorem~\ref{thm:132}  \\ \cline{1-1}\cline{4-4}
       213 & & & Corollary~\ref{cor:213} \\ \hline
       231 & \phantom{$\dfrac{1}{\mid}$}  g.f. $A(x) = \dfrac{1}{1-x-x^2-2x^3-3x^4-2x^5}$ \phantom{$\dfrac{1}{\mid}$}&  A381858& Theorem~\ref{thm:231}  \\ \hline
       312 & $Q_{n+3}$ & A000078& Theorem~\ref{thm:312}  \\ \hline 
       321 &\phantom{$\dfrac{1}{\mid}$} g.f. $B(x)=\dfrac{1-x}{1-2x-x^3+x^5}$\phantom{$\dfrac{1}{\mid}$} &  A381859 & Theorem~\ref{thm:321} \\ \hline 
    \end{tabular}
    \caption{The number of permutations that avoid 312 and 4321 whose square avoids $\sigma\in\S_3$. Here, $T_n$ is the $n$-th Tribonacci number and $Q_n$ is the $n$-th Tetranacci number.}
    \label{tab:4321}
\end{table}

    \begin{theorem}\label{thm:123}
        For $n\geq 7,$ $a_n(312,4321:123)=0$.
    \end{theorem}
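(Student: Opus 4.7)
The plan is to decompose any $\pi\in\S_n(312,4321)$ as a direct sum $\pi=\rho^{(1)}\oplus\cdots\oplus\rho^{(\ell)}$ of sum-indecomposable components, each being a $312$- and $4321$-avoider ending in $1$ (as discussed in the preamble to this section). Since squaring respects direct sums, $\pi^2=\bigoplus_{i=1}^\ell(\rho^{(i)})^2$, and picking one element from each of three distinct summands of $\pi^2$ always yields a $123$ pattern; hence $\ell\leq 2$. If $\ell=2$, then each $(\rho^{(i)})^2$ must be strictly decreasing---otherwise a $12$ from one summand combines with any entry of the other to form a $123$. But for any sum-indecomposable $\rho$ of length $k\geq 2$ ending in $1$, we have $\rho^2(k)=\rho(\rho_k)=\rho(1)=\rho_1\geq 2$, while the strictly decreasing permutation of length $k$ has last entry $1$; hence $k_1=k_2=1$, $\rho^{(i)}=1$ for $i=1,2$, and $\pi=12$ of length $2<7$.

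In the remaining case $\ell=1$, we have $\pi=\sigma 1$ with $\sigma$ a $312$- and $321$-avoider on $\{2,\ldots,n\}$, so by Lemma~\ref{lem:form}, $\sigma-1=\bigoplus_{i=1}^m\epsilon_{d_i}$ for a composition $(d_1,\ldots,d_m)$ of $n-1\geq 6$. Setting $s_i=d_1+\cdots+d_i$, the block form of $\pi$ yields $\pi(s_{i-1}+j)=s_{i-1}+j+2$ for $1\leq j\leq d_i-1$ and $\pi(s_i)=s_{i-1}+2$; iterating, $\pi^2(s_{i-1}+j)=s_{i-1}+j+4$ for $1\leq j\leq d_i-3$. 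Thus if any $d_i\geq 6$, the positions $s_{i-1}+1,s_{i-1}+2,s_{i-1}+3$ give the increasing triple $s_{i-1}+5,s_{i-1}+6,s_{i-1}+7$ in $\pi^2$, and we are done.

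Otherwise every $d_i\leq 5$. We compile a short dictionary of $\pi^2$-values by block size: for $d_i=5$, positions $s_{i-1}+1,s_{i-1}+2$ carry values $s_i,s_i+1$; for $d_i=4$, both $s_{i-1}+2$ and $s_i$ are fixed points of $\pi^2$; for $d_i=3$, $\pi^2(s_i)=s_i+1$; and for $d_i=2$, $s_i$ is a fixed point. Since the fixed points of $\pi^2$ in distinct blocks have values equal to positions (hence strictly increasing across blocks), any three such fixed points form a $123$ pattern, which handles all compositions containing sufficiently many blocks of size $2$ or $4$. The residual cases are finished by combining tabulated values across adjacent blocks: for instance, if $d_i=5$ with $i<m$, the pair $s_i,s_i+1$ at positions $s_{i-1}+1,s_{i-1}+2$ extends to a triple using a specific position of block $i+1$ (whose choice depends on $d_{i+1}$); and if $d_i=5$ with $i=m$, then $m\geq 2$, and any position in an earlier block has $\pi^2$-value at most $n-2<s_i$, completing the triple. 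The cases with $\max_i d_i\in\{3,4\}$ and too few fixed points are settled analogously, exploiting the ``high'' value $s_i+1$ appearing in $\pi^2$ at a specific position of each block of size at least $3$.

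The main obstacle is this final case analysis: once all $d_i\leq 5$, no single block alone furnishes a $123$ pattern in $\pi^2$, and one must patch together $\pi^2$-values from several adjacent blocks using the above dictionary. The unifying feature is that every block of size at least $3$ contributes a predictable ``high'' $\pi^2$-value $s_i+1$ at a known position, and these---combined with the fixed points arising from size-$2$ and size-$4$ blocks---always produce an increasing triple once $n\geq 7$.
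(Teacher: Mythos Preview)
Your overall strategy is sound and genuinely different from the paper's. The paper (i) shows that no permutation ending in $1$ can avoid the chain for $n\geq 8$ by a case analysis on $\pi_1\pi_2\pi_3$, (ii) observes that a general $\pi$ has at most two sum-indecomposable components, hence $n\leq 14$, and (iii) dispatches $7\leq n\leq 14$ by computer. You instead handle the two-component case exactly (forcing $n=2$, which is sharper than the paper's bound) and attack the one-component case via the block decomposition $\sigma-1=\bigoplus\epsilon_{d_i}$, aiming to get $n\geq 7$ directly with no computer check. Your reduction to $\ell\leq 2$, the $\ell=2$ argument, the $\max d_i\geq 6$ case, and your dictionary of $\pi^2$-values are all correct.

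The gap is the final paragraph. Once all $d_i\leq 5$, you assert that fixed points from size-$2$ and size-$4$ blocks together with ``high'' values $s_i+1$ from size-$\geq 3$ blocks always assemble into a $123$, but you do not actually carry this out. A few points make this nontrivial: size-$3$ and size-$5$ blocks contribute \emph{no} fixed points of $\pi^2$ (for $d_i=3$ one has $\pi^2(s_i)=s_i+1$ and $\pi^2(s_{i-1}+1)=s_{i-1}+2$; for $d_i=5$ one has $\pi^2(s_i)=s_i-1$), so ``three fixed points'' is not available when the composition is, say, $(3,3)$ or $(5,1)$ or $(1,1,1,3)$. Your sketch for a size-$5$ block with $i<m$ says the pair $(s_i,s_i+1)$ ``extends using a specific position of block $i+1$,'' but you neither name that position nor check the inequality $\pi^2(\cdot)>s_i+1$ for each possible $d_{i+1}\in\{1,2,3,4,5\}$; likewise the phrases ``settled analogously'' for $\max d_i\in\{3,4\}$ hide several subcases (e.g., $(3,3)$, $(3,1,1,1)$, $(1,3,1,1)$, $(4,1,1)$, $(3,2,1)$, \ldots). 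Each of these does yield a $123$ in $\pi^2$, but this must be verified, and the verification is the substance of the proof in this regime. As written, your plan stops precisely where the work begins; the paper sidesteps this by doing its case analysis on $\pi_1\pi_2\pi_3$ for $n\geq 8$ and relegating the remaining small $n$ to a computer check.
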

    \begin{proof}
        Let us first note that if $\pi =\rho\oplus\tau$ with $\rho$ ending in 1, then if $|\tau|\geq 1$, $\tau$ must also end with 1. Otherwise, we could write $\tau=\tau_1\oplus\tau_2$ with $|\tau_2|\geq1$ and thus $\pi^2 = \rho^2\oplus\tau_1^2\oplus\tau_2^2$ which necessarily contains a 123 pattern. We will show that there are no permutations in $\S_n(312,4321:123)$ that end with 1 for $n\geq 8.$ Together with the note above, this would imply there are no permutations in $\S_n(312,4321:123)$ for $n\geq 15.$ The cases for $7\leq n\leq 14$ are easily verified by computer using built-in Sage packages \cite{Sage}.

        For the sake of contradiction, suppose $\pi =\pi_1\ldots \pi_{n-1}1 \in \S_n(312,4321:123)$ with $n\geq 8.$ Then since $\pi$ avoids 312 and 4321, we must have that $\pi_1\ldots\pi_{n-1}$ avoids 312 and 321 (and is thus of the form described in the previous section, with values shifted up by 1). This implies that $\pi_1\pi_2\pi_3$ takes one of the forms:
        \[
        \{234,235,243,245,324,325,342,345\}.        \]
        Let us proceed by cases. If $\pi_1\pi_2\pi_3=234,$ then $34n$ is a 123 pattern in $\pi^2.$ If $\pi_1\pi_2\pi_3=235,$ then $35n$ is a 123 pattern in $\pi^2.$ If $\pi_1\pi_2\pi_3=243,$ then $\pi_4\in\{5,6\}$, so either $45n$ or $46n$ is a 123 pattern in $\pi^2.$ If $\pi_1\pi_2\pi_3=245,$ then $\pi_4\in\{3,6\}$. If $\pi_1\pi_2\pi_3\pi_4=2456$, then $46n$ is a 123 pattern in $\pi^2$. If $\pi_1\pi_2\pi_3\pi_4=2453$, then $\pi_5\in\{6,7\}$ and so $46n$ or $47n$ is a 123 pattern in $\pi^2.$ If $\pi_1\pi_2\pi_3=324,$ then $\pi_4\in\{5,6\}$ so either $45n$ or $46n$ is a 123 pattern in $\pi^2.$ If $\pi_1\pi_2\pi_3=325$, then $\pi_4\in\{4,6\}.$ If $\pi_4=4$, then $\pi_5\in\{6,7\}$, so $24n$ is a 123 pattern in $\pi^2.$ If $\pi_4=6$ then $\pi_5\in\{4,7\}$ so either $24n$ or $27n$ is a 123 pattern in $\pi^2.$ If $\pi_1\pi_2\pi_3=342$, then $24n$ is a 123 pattern in $\pi^2.$
        Finally, if $\pi_1\pi_2\pi_3=345$, then $\pi_4\in \{2,6\}.$ If $\pi_4=6,$ then $56n$ is a 123 pattern. If $\pi_4=2,$ then $\pi_5\in\{6,7\}$ so either $56n$ or $57n$ is a 123 pattern in $\pi^2.$ Therefore, we have no such permutations in $\S_n(312,4321:123)$ for $n\geq 8.$
    \end{proof}

\begin{theorem}\label{thm:132}
    For $n\geq 1$, $a_n(312,4321:132) = T_{n+2}+T_{n+1}-1$, where $T_n$ is the $n$-th Tribonacci number. 
\end{theorem}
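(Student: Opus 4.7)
I would decompose $\pi \in \S_n(312, 4321)$ as $\pi = \rho \oplus \tau$ with $\rho$ ending in $1$ (so $|\rho|$ is the position of $1$ in $\pi$) and $\tau$ possibly empty. Since squaring respects direct sums, $\pi^2 = \rho^2 \oplus \tau^2$, and any $132$ pattern in $\pi^2$ lies entirely in $\rho^2$, entirely in $\tau^2$, or has its ``$1$'' in $\rho^2$ and a descent ``$32$'' in $\tau^2$. Because $\rho$ is nonempty, this yields: $\pi^2$ avoids $132$ if and only if $\rho^2$ avoids $132$ and $\tau$ is either empty or an involution (i.e., $\tau^2 = \mathrm{id}$). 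Letting $r_k$ count valid $\rho$ of length $k$ and $h(m)$ count involutions in $\S_m(312,4321)$ with $h(0)=1$, we obtain $a_n(312,4321:132) = \sum_{k=1}^n r_k\, h(n-k)$.

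I would then prove $h(m) = T_{m+2}$ via the tribonacci recurrence $h(m)=h(m-1)+h(m-2)+h(m-3)$ with $h(0)=h(1)=1$, $h(2)=2$, splitting on $\tau_1 = j$: cases $j \in \{1,2,3\}$ peel off a direct summand $1$, $21$, or $321$ and contribute $h(m-1)+h(m-2)+h(m-3)$, while $j\ge 4$ is ruled out because the $312$-avoidance at position~$1$ forces the values below $j$ in $\tau_{2..m}$ to appear in increasing position order, placing the value $2$ at some position $q>j$ and forcing $\tau_2 \ge j \ge 4$, whereupon $(\tau_2,1,2)$ at positions $(2,j,q)$ is a $312$ pattern. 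For $r_k$, Lemma~\ref{lem:form} identifies $\rho$ with a composition $(d_1,\ldots,d_m)$ of $k-1$, and a direct calculation shows that position $D_i = d_1+\cdots+d_i$ is a fixed point of $\rho$ exactly when $d_i=2$. I claim $r_1=r_2=1$ and $r_k=2$ for $k\ge 3$: the two valid $\rho$'s for $k\ge 3$ are $\epsilon_k$ (composition $(1^{k-1})$, with explicit $\rho^2=34\cdots k12$) and one other with a single fixed point, corresponding to composition $(2,1^{k-3})$ when $k\in\{3,4\}$ or $(1,2,1^{k-4})$ when $k\ge 5$. Both squares can be written down explicitly and directly verified to avoid $132$. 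The main obstacle is to show every other composition yields a $\rho^2$ containing $132$: blocks of size $\ge 3$, misplaced size-$2$ blocks, or multiple size-$2$ blocks each produce, via the cyclic-shift-by-two structure of $\rho^2$, a descent whose target is strictly smaller than some earlier value---that is, a $132$ pattern. I expect this case analysis to be the most delicate step.

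Finally, substituting $r_1=r_2=1$ and $r_k=2$ for $k\ge 3$ gives $a_n(312,4321:132) = h(n-1) + h(n-2) + 2\sum_{m=0}^{n-3} h(m)$. Replacing $h(m)$ by $T_{m+2}$, using the tribonacci identity $2\sum_{i=2}^{n-1} T_i = T_{n+1} + T_{n-1} - 1$ (provable by induction from $T_{n+2} = T_{n+1}+T_n+T_{n-1}$), and invoking the tribonacci recurrence once more, this collapses to $T_{n+2} + T_{n+1} - 1$, completing the proof.
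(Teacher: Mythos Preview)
Your overall strategy is the same as the paper's: decompose $\pi=\rho\oplus\tau$ with $\rho$ ending in $1$, argue that $\tau$ must be an involution (counted by $T_{m+2}$), determine how many $\rho\in\S_k(312,4321)$ ending in $1$ have $\rho^2$ avoiding $132$, and sum. Your final computation and your identification of the valid $\rho$'s (namely $23\cdots k1$ and $2435\cdots k1$, corresponding to compositions $(1^{k-1})$ and $(1,2,1^{k-4})$) exactly match what the paper finds.

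There is, however, a genuine error in your involution step. When $\tau_1=j$ and $\tau$ avoids $312$, the values $1,\ldots,j-1$ are forced to appear in \emph{decreasing} order as position increases, not increasing: if $a<b<j$ occur at positions $p_a<p_b$, then $(j,a,b)$ at positions $(1,p_a,p_b)$ is already a $312$ pattern. Hence value $2$ sits at a position \emph{before} $j$, not after, and your claimed $312$ pattern $(\tau_2,1,2)$ does not exist. The correct contradiction for $j\ge 4$ is simply that $j,(j-1),\ldots,1$ is then a decreasing subsequence containing $4321$. (Equivalently, as the paper notes, $312$-avoiding involutions are direct sums of blocks $\delta_d$, and $4321$-avoidance forces $d\le 3$.)

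For the ``most delicate step'' you flag---proving $r_k=2$ for $k\ge 3$---the paper does not use your composition parametrization but instead performs a direct case analysis on the initial segment $\pi_1\pi_2$ (and $\pi_3,\pi_4$ as needed), showing that $\pi_1\pi_2\in\{32,34\}$ leads to a $132$ in $\pi^2$, that $\pi_1\pi_2=23$ forces $\pi=23\cdots n1$, and that $\pi_1\pi_2=24$ forces $\pi=2435\cdots n1$. This is equivalent to your proposed composition analysis but avoids having to classify all compositions of $k-1$; it may be the cleaner route for completing your argument.
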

\begin{proof}
    The initial conditions are easily checked, so we let $n> 5.$ In this case, we can write $\pi = \rho\oplus\tau$ where $\rho\in\S_k(312,4321)$ ends in 1 and $\tau\in\S_{n-k}(312,4321)$. Note that $\pi^2 = \rho^2\oplus\tau^2$, so if $\pi \in \S_n(312,4321:132)$, then we must have that $\rho\in\S_k(312,4321:132)$ and 1 appears in the first $k$ elements of $\pi^2$. To avoid 132, this means that $\tau^2$ is the identity permutation, i.e., is an involution. It is well known that 312-avoiding involutions are of the form $\bigoplus_i \delta_{d_i}$ for some composition $\d=(d_1,\ldots,d_m)$, where $\delta_d=d(d-1)\ldots 21.$ However, since $\pi$ also avoids 4321, the corresponding composition $\d$ is composed only of 1s, 2s, and 3s, and are thus enumerated by the Tribonacci numbers $T_{n+2}.$

    Therefore, we need only know how many permutations in $\S_n(312, 4321: 132)$ end in 1. 
    Let $\pi = \pi_1\ldots \pi_{n-1}1$ be such a permutation. Note that since $\pi$ avoids 312 and 4321, $\pi_1\in\{2,3\},$ since otherwise either $\pi_1321$ or $\pi_123$ would be a subsequence and thus would give us either a 4321 or 312 pattern. If $\pi_1=2,$ then for a similar reason $\pi_2\in\{3,4\}$.

    If $\pi_1\pi_2=23,$ then since $\pi_n=1$, we have $\pi^2_n=2$. Since $\pi^2$ avoids 132, we must have $\pi^2_{n-1}=1$ and since $\pi_1\pi_2=23$, we have $\pi_1^2=3.$ Thus $\pi^2 = 3\pi_2^2\ldots \pi_{n-1}^212.$ Since $\pi^2$ avoids 132, it must be that $\pi_2^2\ldots \pi_{n-1}^2$ is increasing and thus $\pi = 234\ldots n1.$

    If  $\pi_1\pi_2=24,$ then similarly to above we have $\pi^2 = 4\pi_2^2\ldots \pi_{n-1}^212.$ If $\pi_j=3$, then all elements in $\pi_3\ldots\pi_{j-1}$ are exactly $\{5,6,\ldots, j+1\}$. However, if $j\geq 4$, this implies that $\pi^2_1\pi^2_{j-1}\pi^2_{j} = 4\pi_{j+1}5$ where $\pi_{j+1}\geq j+2\geq 6$, which is a 132 pattern.
    Thus we have $\pi_3=3,$ so $\pi^2 =4\pi_2^23\ldots \pi_{n-1}^212$ and thus $\pi_2^2\pi^2_4\ldots \pi_{n-1}^2$ is increasing, so $\pi = 243567\ldots n1.$

    Finally, if $\pi_1=3$, then $\pi_2\in\{2,4\}$. If $\pi_1\pi_2=32$, then $\pi^2 = \pi_1^22\ldots \pi_{n-2}^23$ and so $2 m 3$ is a 132 pattern for some $m \in \{\pi_{3}^2,\ldots, \pi_{n-2}^2\}$ since $n>5.$ If $\pi_1\pi_2=34$ and $\pi_k=2$, then in order to avoid 132, $k\in\{n-2,n-1\}$. However, since the elements that appear before 2 must be smaller than the elements that appear after 2 (except $\pi_n=1$), and the elements that appear before 2 must be in increasing order in order for $\pi$ to avoid 4321, we must have that  $\pi_{k-2}=k$, giving us a contradiction. 

    Therefore the only two permutations in $\S_n(312,4321:132)$ with $\pi_n=1$ and $n>5$ are $234\ldots n1$ and $2435\ldots n1.$ For $n\leq 5$, the only such permutations are $1, 21, 231, 321, 2341,3241, 23451,$ and $24351.$ Therefore, by considering the position  of 1 in $\pi$, we can write 
    \begin{align*}a_n(312,4321:132) &=  T_{n+1} + T_{n} + 2T_{n-1} + 2 T_{n-2} + \cdots +2T_1\\
    &=T_{n+1}+T_{n} + 2\sum_{i=1}^{n-1} T_i \\
    &=T_{n+2}+T_{n+1}-1
    \end{align*}
    where the last equality follows from a well-known identity about the sum of the Tribonacci numbers (see for example OEIS A008937).
    \end{proof}

As a corollary, we obtain $a_n(312,4321:213)$ as well since the reverse-complement-inverse of the set $\{312,4321\}$ is $\{312,4321\}$ and the reverse-complement-inverse of $132$ is 213.
\begin{corollary}\label{cor:213}
    For $n\geq 1$, $a_n(312,4321:213) = T_{n+2}+T_{n+1}-1$.
\end{corollary}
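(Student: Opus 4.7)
The plan is to derive the corollary directly from Theorem~\ref{thm:132} by invoking the reverse-complement-inverse symmetry recorded in \cite[Remark~4.1]{AG24}, which was already used in Section~\ref{sec:312-321}. That remark states, in the chain-avoidance setting, that for any set $S$ of patterns and any pattern $\sigma$ one has $a_n(S:\sigma)=a_n(S^{rci}:\sigma^{rci})$, where $rci$ is applied entrywise to the set. So the entire job reduces to two finite symbolic checks.

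First, I would verify that the set $\{312,4321\}$ is invariant under the rci operation. Reversing, complementing, and inverting $312$ returns $312$, and the same chain of operations applied to $4321$ returns $4321$ (the decreasing pattern of any length is an involution under each of reverse, complement, and inverse individually). Hence $\{312,4321\}^{rci}=\{312,4321\}$. Next, I would compute $132^{rci}$: reversing gives $231$, complementing gives $213$, and taking the inverse of $213$ gives $213$, so $132^{rci}=213$.

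With these two identifications in hand, the symmetry yields
\[
a_n(312,4321:213)=a_n(\{312,4321\}^{rci}:132^{rci})=a_n(312,4321:132),
\]
and Theorem~\ref{thm:132} then gives $a_n(312,4321:213)=T_{n+2}+T_{n+1}-1$, which is exactly the stated formula.

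There is essentially no obstacle here beyond performing the rci calculations correctly; the real content was already done in Theorem~\ref{thm:132}. If one wanted a self-contained argument that did not cite \cite{AG24}, the mild technical point would be to verify that applying $rci$ to a permutation commutes with squaring (up to the same $rci$ operation), which is the underlying reason the chain symmetry holds, but since this is already available in the literature and used earlier in the paper, the corollary is just a one-line consequence.
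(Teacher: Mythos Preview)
Your proof is correct and matches the paper's own argument exactly: the paper simply notes that $\{312,4321\}^{rci}=\{312,4321\}$ and $132^{rci}=213$, then cites the reverse-complement-inverse symmetry (as in \cite[Remark~4.1]{AG24}) to deduce the corollary from Theorem~\ref{thm:132}. Your computations of the rci images are accurate, and no further work is needed.
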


 \begin{theorem}\label{thm:231}
        The generating function $A(x) = \sum_{n\geq 0}a_n(312,4321:231)x^n$ is given by
        \[
        A(x) = \frac{1}{1-x-x^2-2x^3-3x^4-2x^5}.
        \]
    \end{theorem}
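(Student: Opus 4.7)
My plan is to use a first-block decomposition. Given $\pi\in\S_n(312,4321)$ whose square avoids $231$, let $k$ be the position of $1$ in $\pi$. Because $\pi$ avoids $312$, every entry in positions $1,\ldots,k-1$ must be smaller than every entry in positions $k+1,\ldots,n$---otherwise a larger prefix entry together with $1$ and a smaller suffix entry would form a $312$---so $\pi$ factors uniquely as $\pi=\rho\oplus\tau$ with $\rho\in\S_k(312,4321)$ ending in $1$ and $\tau\in\S_{n-k}(312,4321)$. Since squaring respects direct sums, $\pi^2=\rho^2\oplus\tau^2$.

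Next I would verify that no $231$-instance can cross a direct-sum boundary: its smallest value occupies the rightmost of its three positions, so if that value lies in $\tau^2$ the other (larger) values do too, while if it lies in $\rho^2$ the other (earlier) positions do too. Hence $\pi^2$ avoids $231$ if and only if both $\rho^2$ and $\tau^2$ do. Writing $R(x)=\sum_{k\ge 1} r_k x^k$ for the generating function counting valid first blocks, the decomposition gives $A(x)=1+R(x)A(x)$, so $A(x)=1/(1-R(x))$, and it remains to show $R(x)=x+x^2+2x^3+3x^4+2x^5$. By Lemma~\ref{lem:form}, the standardized prefix of $\rho$ has the form $\bigoplus_i \epsilon_{d_i}$ for a composition $(d_1,\ldots,d_m)$ of $k-1$, so for $k\le 5$ one enumerates these finitely many compositions, computes $\rho^2$ directly, and verifies $231$-avoidance, yielding $r_1=1,\,r_2=1,\,r_3=2,\,r_4=3,\,r_5=2$.

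The main obstacle is establishing $r_k=0$ for all $k\ge 6$. The structural key is that the position $p$ of $1$ in $\rho^2$ equals $k-1$ when $d_m=1$ and $k-2$ otherwise, and that $231$-avoidance forces $\rho^2_1\rho^2_2\cdots\rho^2_{p-1}$ to be strictly decreasing (any ascent there, combined with the $1$ at position $p$, exhibits a $231$). Pushing this monotonicity through $\rho^2_i=\rho_{\rho_i}$ and the rigid prefix shape $\bigoplus_i \epsilon_{d_i}$ severely constrains $\rho_1$: in the $d_m=1$ branch the standardized prefix is pinned to $(k-2,1,2,\ldots,k-3,k-1)$, which itself contains $312$ once $k\ge 5$, while in the $d_m\ge 2$ branch $\rho_1$ is forced into one of a few specific values (namely $k-2$, $k-3$, or $k-1$, depending on whether $k$ and $k-1$ appear among $\rho^2_{k-1},\rho^2_k$), all exceeding the permissible values $\rho'_1\in\{1,2\}$ once $k\ge 7$. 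An additional constraint $\rho^2_{k-1}-\rho^2_k\le 1$, obtained by considering $231$ patterns that use $\rho^2_k$ as the smallest value, disposes of the remaining borderline configurations at $k=6$. I would carry this out by directly verifying the $k=6$ case against the $16$ compositions of $5$ and then invoking the forced-value argument to rule out every $k\ge 7$. Once $r_k=0$ for $k\ge 6$ is confirmed, the stated generating function follows immediately from $A(x)=1/(1-R(x))$.
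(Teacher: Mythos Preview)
Your plan is the same as the paper's: decompose at the position of $1$, verify that $231$ cannot straddle the direct-sum boundary, reduce to $A(x)=1/(1-R(x))$ where $R(x)$ counts those $\rho\in\S_k(312,4321:231)$ ending in $1$, enumerate $r_k$ for $k\le 5$, and show $r_k=0$ for $k\ge 6$.

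Where you diverge is in that last step, and the paper's argument is considerably shorter than yours. Rather than locating $1$ in $\rho^2$, splitting on $d_m$, forcing monotonicity of a long prefix of $\rho^2$, and then chasing $\rho_1$ through the block structure, the paper simply notes that $\rho^2_k=\rho_1\in\{2,3\}$ and $\rho^2_1=\rho_{\rho_1}\in\{\rho_2,\rho_3\}$, and that the value $k$ lands at some position strictly between $1$ and $k$ in $\rho^2$ once $k\ge 6$. The triple $(\rho^2_1,\,k,\,\rho^2_k)$ is then a $231$ unless $\rho^2_1\le\rho^2_k$, which leaves only the subcase $\rho_1=3,\ \rho_3=2$, disposed of in one further line. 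No separate handling of $k=6$ versus $k\ge 7$ is needed, and the Lemma~\ref{lem:form} composition structure of the prefix is never invoked.

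Your structural route can be made to work, but one specific claim is not right as stated: in the $d_m=1$ branch you say the standardized prefix is ``pinned to $(k-2,1,2,\ldots,k-3,k-1)$.'' For $k\ge 5$ there is simply no prefix of the required $\bigoplus\epsilon_{d_i}$ shape whose square satisfies your monotonicity constraint, so nothing is pinned to a $312$-containing permutation---every candidate already fails. What your monotonicity genuinely yields is that $\rho^2_1=\rho_{\rho_1}$ must be within $2$ of $k$, forcing $\rho_1$ to be the \emph{position} in $\rho$ of a value near $k$, hence $\rho_1$ near $k-2$; this contradicts $\rho_1\in\{2,3\}$ once $k\ge 6$. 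Phrased that way, your argument collapses to essentially the paper's direct one.
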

    \begin{proof}
        The initial conditions are easily checked. As before, we can write $\pi = \rho\oplus\tau,$ where $\rho$ ends in 1. Since any occurrence of $231$ in $\pi^2$ must occur completely within $\rho^2$ or $\tau^2$, we have that 
        \[
        a_n(312,4321:231) = \sum_{i=1}^n b_i(312,4321:231)a_{n-i}(312,4321:231),
        \]
         where $b_n(312,4321:231)$ is the number of permutations in $\S_n(312,4321:231)$ that end with 1. Thus it suffices to find $b_n(312,4321:231).$

        Let $\pi = \pi_1\ldots\pi_{n-1}1\in \S_n(312,4321:231)$ for some $n\geq 6$. We know that $\pi_1\in\{2,3\}.$ First consider the case that $\pi_1=2.$ Then we must have $\pi_2\in\{3,4\}$. Note that $\pi^2 = \pi_2\pi^2_2\ldots \pi_{n-1}^2 2$. Therefore $\pi_2 n 2$ is a 231 pattern in $\pi^2$. If $\pi_1=3,$ then $\pi_2\in\{2,4\}$ and $\pi_3\in\{2,4,5\}.$ Thus, in $\pi^2 = \pi_3\pi_2^2\ldots\pi_{n-1}^2 3$ either $\pi_3n3$ is a 231 pattern or $\pi_3 =2.$ But then $\pi_2=4$ and so $\pi^2_3=4$, so $4 n 3$ or $4(n-1)3$ is a 231 pattern in $\pi^2.$ Thus if $n\geq 6,$ there are no permutations in $\S_n(312,4321:231)$ that end in 1. 

        If $n<6$, it is easily verified that the permutations avoiding the chain $(312, 4321:231)$ that end in 1 are those in the set 
        \[
        \{1,21,231,321,2431,3241,3421,32541,34521\}.
        \] This implies that $a_n := a_{n}(312,4321:231)$ satisfies:
        \[
        a_n = a_{n-1} + a_{n-2} + 2a_{n-3} + 3a_{n-4} + 2a_{n-5},
        \] which together with initial conditions, implies $a_n(312,4321:231)$ has the associated generating function given in the statement of the theorem.
    \end{proof}

    Note that in the next theorem, the result given can also be described as the number of permutations that strongly avoid 312 and classically avoid 4321. Recall that in \cite{PG24}, the authors do enumerate those permutations that strongly avoid 312 and avoid some patterns $\sigma\in\S_4$. However, they do not address this particular case in that paper.
    
    \begin{theorem}\label{thm:312}
        For $n\geq 1$, $a_n(312,4321:312) = Q_{n+3}$, where $Q_n$ is the $n$-th Tetranacci number.
    \end{theorem}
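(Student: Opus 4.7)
The plan is to combine the direct sum decomposition described at the start of this section with a generating function argument. Every permutation decomposes uniquely as $\pi = \rho \oplus \tau$ with $\rho$ indecomposable (and $\tau$ possibly empty), and since $\pi^2 = \rho^2 \oplus \tau^2$, each of the three conditions ``$\pi$ avoids $312$'', ``$\pi$ avoids $4321$'', and ``$\pi^2$ avoids $312$'' respects this decomposition. In a $312$-avoiding permutation, indecomposability is equivalent to having the value $1$ in the last position: if $1$ were at position $j < n$, then $312$-avoidance would force positions $1,\ldots,j$ to hold exactly the values $\{1,\ldots,j\}$, giving a nontrivial splitting. Letting $b_k$ denote the number of permutations in $\S_k(312,4321:312)$ ending in $1$, and setting $A(x) = \sum_{n\geq 0} a_n(312,4321:312)\,x^n$ (with $a_0 = 1$) and $B(x) = \sum_{k\geq 1} b_k x^k$, we obtain $A(x) = 1/(1 - B(x))$.

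The heart of the argument is to show that $b_1 = b_2 = b_3 = b_4 = 1$, with sole examples $1$, $21$, $321$, and $3421$ (verified by direct inspection), and that $b_k = 0$ for all $k \geq 5$. Granting this, $B(x) = x + x^2 + x^3 + x^4$, so $A(x) = 1/(1 - x - x^2 - x^3 - x^4)$, which is the standard generating function for the Tetranacci numbers and yields $a_n = Q_{n+3}$.

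To establish $b_k = 0$ for $k \geq 5$, I would invoke Lemma~\ref{lem:form}: if $\pi \in \S_k(312,4321)$ ends in $1$, then $\alpha := (\pi_1 - 1)(\pi_2-1)\cdots(\pi_{k-1} - 1)$ lies in $\S_{k-1}(312,321)$, since a trailing $1$ together with any $321$ in $\alpha$ would produce a $4321$ in $\pi$, and hence $\alpha = \bigoplus_{i=1}^m \epsilon_{d_i}$ for some composition of $k-1$. Setting $p = \pi^{-1}(k)$, the block structure of $\alpha$ gives $p = k - 2$ when $d_m \geq 2$ and $p = k - 1$ when $d_m = 1$, and one computes $\pi^2(p) = \pi(k) = 1$ together with $\pi^2(k) = \pi(1) = \alpha(1) + 1 \in \{2, 3\}$. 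Since $k \geq 5$ forces $p \geq 3$, any index $i$ with $1 \leq i < p$ and $\pi^2(i) > \pi(1)$ yields a $312$ pattern $(\pi^2(i),\,1,\,\pi(1))$ at positions $(i, p, k)$ of $\pi^2$. Such an $i$ is produced by a short case analysis on $d_1$: if $d_1 = 1$, then $\pi(1) = 2$ and $\pi^2(1) \geq 3$ since the values $1$ and $2$ sit at positions $p$ and $k$; if $d_1 = 2$ or $d_1 \geq 4$, then $\pi^2(1) = \pi(3) \in \{4, 5\}$ exceeds $\pi(1) = 3$; and if $d_1 = 3$, then $\pi^2(3) = \pi(2) = 4 > 3$, and the constraint $d_1 + \cdots + d_m = k - 1 \geq 4$ forces $m \geq 2$, hence $p \geq 4 > 3$.

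The main obstacle is the $d_1 = 3$ subcase, where $\pi^2(1) = 2$ is too small to serve as the leading ``$3$'' of the $312$ pattern; one must instead work at position $3$ and separately verify $p \geq 4$ using the composition constraint. Once this case analysis is assembled, the generating function identity immediately delivers $a_n = Q_{n+3}$.
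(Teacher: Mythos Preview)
Your proof is correct and follows the same architecture as the paper's: decompose $\pi=\rho\oplus\tau$ with $\rho$ ending in $1$, observe that the chain condition factors through the direct sum, and reduce to showing that the only admissible indecomposable blocks are $1,21,321,3421$, which yields the Tetranacci recurrence (equivalently, $A(x)=1/(1-x-x^2-x^3-x^4)$).

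The one genuine difference is how the key claim $b_k=0$ for $k\geq 5$ is obtained. The paper simply invokes \cite[Theorem~3.1]{BS19}, which already characterizes the permutations in $\S_n(312:312)$ ending in $1$, and then intersects with $4321$-avoidance. You instead prove this from scratch: using Lemma~\ref{lem:form} to write the prefix as $\bigoplus_i\epsilon_{d_i}$, locating $p=\pi^{-1}(k)$ and computing $\pi^2(p)=1$, $\pi^2(k)=\pi(1)\in\{2,3\}$, and then exhibiting an index $i<p$ with $\pi^2(i)>\pi(1)$ by a case split on $d_1$. Your argument is self-contained and avoids the external citation; the paper's is shorter but leans on prior work. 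One small point worth tightening in your write-up: in the $d_1=3$ subcase the inference ``$m\geq 2$, hence $p\geq 4$'' deserves one more sentence---the reason is that with $d_1=3$ the last block begins at position at least $4$, and the maximum of $\alpha$ lies in that block, so $p\geq 4$ regardless of $d_m$.
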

    \begin{proof}
        The initial conditions are easily checked. As before, we can write $\pi = \rho\oplus\tau,$ where $\rho$ ends in 1. Since any occurrence of $312$ in $\pi^2$ must occur completely within $\rho^2$ or $\tau^2$, we have that 
        \[
        a_n(312,4321:312) = \sum_{i=1}^n b_i(312,4321:312)a_{n-i}(312,4321:312),
        \]
        where $b_n(312,4321:312)$ is the number of permutations in $\S_n(312,4321:312)$ that end with 1. Thus it suffices to find $b_n(312,4321:312).$

        In \cite[Theorem 3.1]{BS19}, B\'{o}na and Smith characterize those permutations in $\S_n(312:312)$ that end in 1. By that result, the only permutations in $\S_n(312,4321:312)$ that end in 1 are those in the set
        \[
        \{1,21,321,3421\}.
        \]
        By considering the position of 1 in $\pi$, it follows that \[a_n(312,4321:312) = \sum_{i=1}^4a_{n-i}(312,4321:312),\] which is exactly the Tetranacci recurrence.
    \end{proof}

    \begin{theorem}\label{thm:321}
         The generating function $B(x) = \sum_{n\geq 0}a_n(312,4321:321)x^n$ is given by
        \[
        B(x) = \frac{1-x}{1-2x-x^3+x^5}.
        \]
    \end{theorem}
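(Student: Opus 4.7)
The plan is to mirror the two-step strategy of Theorems~\ref{thm:231} and~\ref{thm:312}. First, decomposing $\pi = \rho \oplus \tau$ with $\rho$ ending in $1$ and observing that any $321$ pattern in $\pi^2 = \rho^2 \oplus \tau^2$ must lie entirely within a single summand (the values of $\tau^2$ strictly exceed those of $\rho^2$, so a strictly decreasing pattern cannot straddle the direct sum) yields the convolution
\[
a_n(312,4321:321) = \sum_{i=1}^n b_i \, a_{n-i}(312,4321:321),
\]
where $b_n$ denotes the number of permutations in $\S_n(312,4321:321)$ that end in $1$. In terms of generating functions, this amounts to $B(x) = 1/(1 - B_b(x))$, where $B_b(x) = \sum_{n \ge 1} b_n x^n$.

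The second step is to determine $b_n$. If $\pi = \pi_1 \pi_2 \cdots \pi_{n-1} 1 \in \S_n(312,4321)$, then the prefix $\pi_1 \cdots \pi_{n-1}$ (viewed on values $\{2,\ldots,n\}$) must avoid both $312$ and $321$; the latter because appending $1$ would otherwise produce a $4321$. Lemma~\ref{lem:form} (applied to the shifted prefix) identifies $\pi$ with a composition $\d = (d_1, \ldots, d_m)$ of $n-1$ via $\pi_1\cdots\pi_{n-1} = \bigl(\bigoplus_{i=1}^m \epsilon_{d_i}\bigr)$ shifted up by $1$. Direct enumeration gives $b_1 = b_2 = 1$ and $b_3 = b_4 = 2$. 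The key claim is that $b_n = 1$ for $n \ge 5$, with the unique valid $\pi$ being $\epsilon_n = 23 \cdots n\, 1$ (corresponding to $\d = (1, \ldots, 1)$, for which $\pi^2 = \epsilon_n^2 = 34 \cdots n\, 12$ manifestly avoids $321$).

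To prove the claim, suppose $\d \neq (1, \ldots, 1)$ and let $i^*$ be the smallest index with $d_{i^*} \ge 2$. Using the explicit block-level formulas $\pi(k) = k + 2$ when $k$ is not the last position of its block, $\pi(D_i) = D_{i-1} + 2$ (with $D_i := d_1 + \cdots + d_i$), and $\pi(n) = 1$, I compute $\pi^2$ at a few specific positions and exhibit a $321$ pattern by casework. Representative triples of positions in $\pi^2$ and their values are: if $i^* \ge 2$ and $d_{i^*} = 2$, positions $(i^*-1, D_{i^*}, n)$ carry values $(i^*+2, i^*+1, 2)$; if $i^* \ge 2$ and $d_{i^*} = 3$, positions $(i^*-1, i^*, n)$ carry $(i^*+2, i^*+1, 2)$; if $i^* \ge 2$ and $d_{i^*} \ge 4$, positions $(D_{i^*}-3, D_{i^*}-2, n)$ carry $(D_{i^*}+1, i^*+1, 2)$. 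The cases $i^* = 1$ are analogous but the terminal position $n$ is replaced by $\pi^{-1}(n) \in \{n-2, n-1\}$, since $\pi^2(n) = \pi(1) \ne 2$ when $d_1 \ge 2$. In every sub-case the assumption $n \ge 5$ ensures that the three chosen positions are distinct and in increasing order.

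Finally, summing gives
\[
B_b(x) = x + x^2 + 2x^3 + 2x^4 + \frac{x^5}{1 - x} = \frac{x + x^3 - x^5}{1 - x},
\]
and therefore
\[
B(x) = \frac{1}{1 - B_b(x)} = \frac{1 - x}{1 - 2x - x^3 + x^5}.
\]
The hard part will be the uniform casework for $b_n = 1$ when $n \ge 5$: each individual case reduces to a short direct verification of $\pi^2$ at a handful of positions, but organizing the cases cleanly based on both the value of $d_{i^*}$ and whether $i^* = 1$ is the main technical obstacle.
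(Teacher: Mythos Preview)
Your proposal is correct and follows essentially the same approach as the paper: both reduce to showing $b_n=1$ for $n\ge 5$ by casework, and the paper's split into $\pi_1=2$ versus $\pi_1=3$ is exactly your split into $i^*\ge 2$ versus $i^*=1$. One small caution: for $i^*=1$ with $d_1\in\{2,3\}$ the triples are not literally obtained by replacing $n$ with $\pi^{-1}(n)$ in your $i^*\ge 2$ formulas (the first position would become $0$), so you will need to choose fresh positions there---for instance $(1,2,\pi^{-1}(n))$ when $d_1=2$ and $(2,3,\pi^{-1}(n))$ when $d_1=3$---but these are equally short verifications.
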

    \begin{proof}
        As before, we can write $\pi = \rho\oplus\tau,$ where $\rho$ ends in 1. Since any occurrence of $321$ in $\pi^2$ must occur completely within $\rho^2$ or $\tau^2$, and thus it suffices to find $b_n(312,4321:321)$, the number of such permutations that end in 1.

        As before, if $\pi_n=1$ and $n\geq 5$, then $\pi_1\in\{2,3\}$. If $\pi_1=2,$ then we claim that $\pi = 234\ldots n1.$ If not then there is some smallest $k$ so that \[\pi = 23\ldots k(k+2)(k+3)\ldots (k+i)(k+1)\ldots 1\]
        but then in $\pi^2,$ $(k+2)(k+1)2$ is a 321 pattern. 
        Now consider the case when $\pi_1=3$. If $\pi_2=2$, then $\pi_1^2\pi_2^2 = \pi_3\pi_2 = \pi_32$ with $\pi_3\neq 1$ since $n\geq 5.$ Thus $\pi_321$ is a 321 pattern. If $\pi_j=2$ for $j>3$, then $\pi_2$ is 4 and $\pi_3>4$. Thus $\pi_1^2\pi_j^2\pi_n^2=\pi_343$ is a 321 pattern. Finally, if $\pi_3=2,$ then $\pi_1\pi_2\pi_3=342$, so $\pi^2=2\pi_44\ldots 3,$ where $\pi_4>3$ since $n\geq 5$. Thus $\pi_4 4 3$ is a 321 pattern in $\pi^2.$

        This all implies that there is only one permutation in $\S_n(312,4321:321)$ that ends in 1 when $n\geq 5.$ For $n<5$, it is straightforward to check that the permutations that end in 1 are those in the set:
        \[
        \{1,21,231,321,2341,3421\}.
        \]
        Thus it follows that $a_n(312,4321:321)$ satisfies
        \[
        a_n(312,4321:321) = a_{n-3}(312,4321:321)+a_{n-4}(312,4321:321)+\sum_{i=0}^{n-1} a_i(312,4321:321).
        \]
        This implies the associated generating function $B(x) = \sum_{n\geq0} a_n(312,4321:321)x^n$ satisfies \[B(x) = 1+ (x^3+x^4) B(x) + \frac{xB(x)}{1-x},\] which in turn, implies the statement of the theorem.
    \end{proof}

\section{Further directions and open questions}

There are many future possible directions of research. One interesting question that could be addressed by using some of the ideas from this paper is to enumerate permutations $\pi\in\S_n$ that avoid 312 and 321 whose \textit{cube} (or a higher power) avoids a pattern $\sigma.$ It seems like you may be able to answer this question generally in terms of the related compositions, but not directly in terms of composition avoidance as was done in this paper. For example, based on the ideas similar to the ones in this paper, we conjecture that the number of permutations that avoid the chain $(312,321:\varnothing:2143)$ (i.e, those with the property that $\pi$ avoids $\{312,321\}$ and $\pi^3$ avoids $2143$) is equal to the number of compositions $\d=(d_1,d_2,\ldots,d_m)$ of $n$ so that all $d_i$ are 1 or 3, except for at most one. Section 5 of \cite{AG24} addresses a similar question for when $\sigma\in\S_3$. Those ideas combined with the ones in this paper may allow one to enumerate permutations avoiding any chain of any length of the form $(312,321:\tau_1:\tau_2:\cdots)$ for any patterns $\tau_i\in\bigcup_{m\geq 1} \S_m$.

Another possible direction is  to enumerate permutations that avoid the chain $(312,k\ldots 21:\sigma)$ for some $\sigma\in\S_3$ and some $k\geq 5.$ For example, we conjecture that if we take $a_n:=a_n(312,54321: 132)$ then \[a_n= a_{n-1}+a_{n-2}+a_{n-3}+a_{n-4}+n-1\] for $n\geq 6.$ Perhaps if this can be extended for further $k$, one could use these ideas to enumerate $a_n(312:132)$ itself, or $a_n(312:\sigma)$ for some other $\sigma\in\S_3\setminus\{312\}$, each of which is currently unknown, though lower bounds for these numbers can be found in \cite{AG24}.

\nocite{*}
\bibliographystyle{plainnat}
\bibliography{refs}
\label{sec:biblio}

\subsection*{Statements and Declarations}
The authors declare that no funds, grants, or other support were received during the preparation of this manuscript. 
The authors have no relevant financial or non-financial interests to disclose.
The views expressed in this article do not necessarily represent 
the views or opinions of the U.S. Naval Academy, Department of the Navy, or Department of Defense or any of its components.

\subsection*{Data availability}
 In this manuscript, there is no experimental or computer calculated data, and there is only logical proof.
\end{document}